\renewcommand{\includegraphics}{\epsfbox}
\renewcommand{\ge}{\geqslant}
\renewcommand{\le}{\leqslant}
\newcommand{\C}{\mathbb{C}}
\newcommand{\N}{\mathbb{N}}
\newcommand{\dd}{\delta}
\newcommand{\dl}{\delta_L}
\newcommand{\dr}{\delta_R}
\newcommand{\kl}{\kappa_L}
\newcommand{\kr}{\kappa_R}
\newcommand{\kk}{\kappa_{LR}}
\newcommand\vstrut[2]{\rule[-#1]{0pt}{#2}}
\newcommand{\ldescent}[1]{{\mathcal L (#1)}}
\newcommand{\rdescent}[1]{{\mathcal R (#1)}}
\newcommand\idest{i.e.,\ }
\newcommand\br{{\mathbf r}}
\newcommand\bs{{\mathbf s}}
\newcommand\bu{{\mathbf u}}
\newcommand\bv{{\mathbf v}}
\begin{document}
\theoremstyle{plain}
\numberwithin{subsection}{section}
\newtheorem{thm}{Theorem}[section]
\newtheorem{prop}[thm]{Proposition}
\newtheorem{cor}[thm]{Corollary}
\newtheorem{clm}[thm]{Claim}
\newtheorem{lem}[thm]{Lemma}
\newtheorem{conj}[thm]{Conjecture}
\theoremstyle{definition}
\newtheorem{defn}[thm]{Definition}
\newtheorem{rem}[thm]{Remark}
\newtheorem{eg}[thm]{Example}

\title{
A presentation for
the symplectic blob  algebra }
\author{R. M. Green \and  P. P. Martin
\and A. E. Parker$^1$} 
\address{Department of Mathematics \\ University of Colorado \\
Campus Box 395 \\ Boulder, CO  80309-0395 \\ USA }
\email{rmg@euclid.colorado.edu} 
\address{Department of Mathematics \\ University of Leeds \\ Leeds,
  LS2 9JT \\ UK}
\email{ppmartin@maths.leeds.ac.uk}
\email{parker@maths.leeds.ac.uk}
\footnotetext[1]{Corresponding author}

\begin{abstract}
The symplectic blob algebra $b_n$ ($n \in \N$) is a
finite dimensional algebra defined by a
multiplication rule on a basis of certain diagrams.
The rank
$r(n)$ of $b_n$ is not known in general, but 
 $r(n)/n$ grows unboundedly with $n$.
For each $b_n$ we define an algebra by presentation, such that the
number of generators and relations grows linearly with $n$. 
We prove that
these algebras are isomorphic.
\end{abstract}

\maketitle

\section{Introduction}\label{intro}

\newcommand{\bc}{boundary condition}
\newcommand{\TL}{Temperley--Lieb}
\newcommand{\SM}{Statistical Mechanics}
The transfer matrix formulation of lattice Statistical Mechanics 
(see e.g. \cite{Baxter,marbk}) is a source for many sequences of
algebras and representations --- among the best known examples are the
Temperley--Lieb algebras \cite{TL} and the quantum groups
\cite{Jimbo85}. 
Physically one seeks to diagonalise the transfer matrix, and this
corresponds to computing the irreducible representations of the
associated algebras. 
Statistical Mechanics often provides algebras with a basis of
`diagrams' (describing the configuration of physical states),
leading to the notion of diagram algebras.
The Temperley--Lieb diagram algebra arises in several different
Statistical Mechanical models (such as Potts models, $q$-spin chains
and vertex models), but in each case the algebra manifests only when
specific `open' physical \bc s are imposed.
It is physically appropriate to consider other \bc s, 
however, and this forces
a generalisation in the algebra. For example, periodic boundary
conditions necessitate generalisation to the blob diagram algebra
\cite{martsaleur}. 
More recently it has been shown \cite{degiernichols} that other physically
interesting \bc s necessitate further generalisation.
Both the \TL\  and blob algebras have alternative definitions by
presentation, and each of the diagram- and presentation-based
definitions suggest candidates for suitable generalisations.
The study of these two generalisations has begun in  \cite{degiernichols,DegierPyatov04}
and  \cite{gensymp}, but the isomorphism between them was not
established
(and it does not follow from the isomorphisms for the earlier algebras). 
We prove the isomorphism here.

In the study of Hecke algebras of arbitrary type, a useful tool is the
\TL\ algebra of the same type (see \cite{GrahamLehrer03,gensymp} for references). 
This is, in each case, a
Hecke quotient 
algebra defined by presentation. Type-$A$ gives the presentational
form of the ordinary \TL\ algebra. Type-$B$ gives the blob algebra;
and the presentational form of the new generalisation is a quotient of
type-$\tilde C$
(also known as the two-boundary \TL\ algebra \cite{degiernichols}). 
For this reason, the new diagram algebra is known as
the  \emph{symplectic blob algebra}, $b_n$
(in \cite{gensymp} the notation $b_n^x$  is used).

\medskip

In \cite{gensymp} we 
investigated its generic representation theory and proved various
representation theoretically
important properties of the algebra, for instance that it has a
cellular basis, that it is generically semi-simple
(in the Hecke algebra parameters),
that the associated sequence $n \rightarrow \infty$ of module
categories has a `thermodynamic limit',
and that it is a quotient of the Hecke algebra of type-$\tilde{C}$. 
For a number of reasons explained in the original paper (the role of
Temperley--Lieb and blob algebras in Statistical Mechanics
and in solving the Yang--Baxter equations; the
intrinsic interest in the Hecke algebra of type-$\tilde{C}$,
and so on) one is interested in the representation theory of this
algebra.
The representation theory of the ordinary \TL\ and blob cases 
is rather well understood, 
and has an elegant geometrical description,
over an arbitrary algebraically closed field \cite{blobcgm}.
So far here, however,  not even the blocks over $\C$ are known.
As with finite-dimensional algebras defined as diagram algebras in general
(or indeed any algebra), a powerful
tool in representation theory is to be able to give 
an efficient presentation,
so this is our objective here.

The paper is structured as follows. We first review the various
objects and notations and some of the basic properties of the
symplectic blob algebra that will be used in the paper.
This is followed by a statement and proof of a presentation for the
algebra. The proof occupies the majority of the paper. 

It is easy to establish an explicit surjective algebra homomorphism in
one direction, and we start with this. However a suitable closed
formula for the rank at level $n$ 
is not presently known for either algebra, 
so we are motivated to use a method that does not rely on rank bounds.
Our method generalises an approach in \cite{G35}, and so should be of
wider interest in the study of Coxeter groups and related algebras.

\section{The symplectic blob algebra} \label{sect:symp}

We start with a summary of \cite[\S6]{gensymp}.
Fix $n,m \in \N$, with $n+m$ even, and $k$ a field. 
A \emph{Brauer $(n,m)$-partition} $p$ is a partition of the set
$V \cup V'$ into pairs, where $V = \{ 1, 2, \ldots, n\}$
and  $V' = \{ 1', 2', \ldots, m'\}$.
Following Brauer \cite{brauer} and Weyl \cite{weyl46}
we will depict $p$ as a \emph{Brauer $(n,m)$-diagram}.
A diagram for $p$ is 
a rectangle with
$n$ vertices labelled $1$ through to $n$ on the top edge and
$m$ vertices labelled $1'$ through to $m'$ on the bottom, and
 two vertices $a$  and $b$ connected, with an arbitrary line embedded in the
plane of the rectangle, if $\{a,b\} \in p$.

Any two rectangles with embeddings coding 
 the same set partition are called {\em equivalent}, and regarded as
the same Brauer diagram.

Now consider a diagram among whose 
embeddings (in the above sense) are
embeddings with no lines crossing. 
For such a diagram, we may consider 
the sub-equivalence class of embeddings  that
indeed have no crossings. This class (or a representative thereof) is a
\emph{Temperley--Lieb diagram}.
Note that such a diagram $d$ defines not only a pair-partition of $V
\cup V'$ but also a partition of the open intervals of the frame of
the rectangle excluding $V \cup V'$ (two intervals are in the same
part if there is a path from one to the other in the rectangle
that does not cross a line of $d$).
 
Our first objective is to define a certain diagram category, that is a
$k$-linear category whose hom-sets each have a basis
consisting of diagrams, and where
multiplication is defined by 
diagram concatenation
(the object class is $\N$ in our case), 
and 
simple 
\emph{ straightening rules} to be applied when the concatenated object is not
formally a diagram. 
For example in 
the Brauer or \TL\ diagram category,
a concatenation may produce a diagram, as here:
$$ \epsfbox{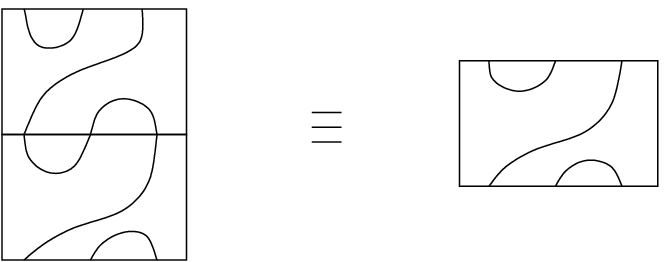}$$
or not, as here:
\begin{equation} \label{eq:1} 
 \epsfbox{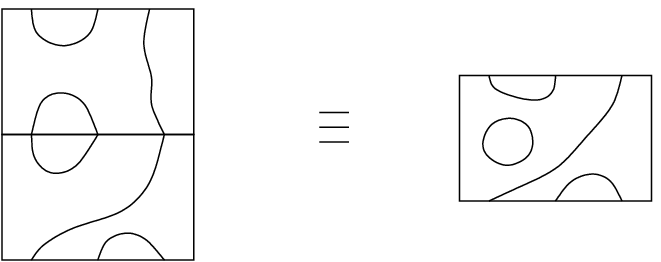}
\end{equation} 
A straightening rule is a way of expressing such products 
as (\ref{eq:1})   in the span
of basis diagrams. 

The resulting diagram in (\ref{eq:1}) 
is an example of a \emph{pseudo Temperley--Lieb
diagram}.
Simply put, it fails to be a proper Temperley--Lieb
diagram because of the loop.
The  set of pseudo Temperley--Lieb diagrams includes all the
Temperley--Lieb diagrams, but we also allow diagrams with loops, which
may appear anywhere in the diagram, although still with no crossing
lines. 
Here
(in addition to the equivalence of different embeddings of open lines, as before)
 isotopic deformation of a loop
without crossing a line
results in an equivalent embedding.

The set of pseudo Temperley--Lieb diagrams with $m=n$
is closed under concatenation. 
Thus we can define a  straightening rule for multiplication of Temperley--Lieb
diagrams by
imposing a relation on the $k$-space spanned by pseudo
Temperley--Lieb diagrams that will remove the loops (and is consistent
with concatenation).

\begin{defn}
For $\delta\in k$ and $n \in \N$,
the \emph{Temperley--Lieb algebra} $TL_n = TL_n(\delta)$ 
is the $k$-algebra
with $k$-basis the Temperley--Lieb $(n,n)$-diagrams and multiplication defined
by concatenation. We impose the relation: each loop that may arise
when multiplying is omitted and 
replaced by a factor $\delta$.
\end{defn}

Next we generalise to \emph{decorated Temperley--Lieb} diagrams. 
Here we put
elements of a monoid on the lines (like beads on a string). 
When decorated diagrams are concatenated,
two or more line segments are combined in sequence as before. 
But now we need a rule to combine the monoid
elements on these segments to make a new monoid element for the
combined line.
One such rule is simply to multiply in the monoid in the indicated order.
This gives us a well  defined associative 
diagram calculus --- see section 3 of \cite{gensymp}
for a detailed discussion and proof of this. 

We will now focus on a particular set of decorated Temperley--Lieb diagrams
--- the ones used to define the symplectic blob algebra.
To begin, we decorate with 
the
free monoid on two generators. 
The beads depicting these generators are called {\em blobs}: 
a ``left'' blob, $L$, (usually a black filled-in circle 
on the diagrams) and a
``right'' blob, $R$, (usually a white filled-in circle on the diagrams).
 
A line in a (pseudo) Temperley--Lieb diagram is said to be
\emph{$L$-exposed} (respectively \emph{$R$-exposed}) if it can be deformed to touch
the left hand side (respectively right hand side) of the 
rectangular frame without
crossing any other lines.

A \emph{left-right blob pseudo-diagram} is a diagram obtained
from a  pseudo Temperley--Lieb
diagram by allowing left and
right blob decorations with the following constraints.
Any line decorated with a left blob must be $L$-exposed and
any line decorated with a right blob must be $R$-exposed.
Also all segments with decorations must
be deformable so that the left blobs can touch the left hand
side and the right blobs touch the right hand side of the frame
{\em simultaneously} without crossing.

Concatenating diagrams cannot change a $L$-exposed line to a
non-$L$-exposed line, and similarly for $R$-exposed lines. Thus the set of
left-right blob pseudo-diagrams is closed under diagram concatenation.
(See \cite[proposition 6.1.2]{gensymp}.)

The set of left-right blob pseudo-diagrams is infinite.
For example, if a left blob can appear on a line in a given
underlying pseudo-diagram,
then arbitrarily many such blobs can appear. 
To define a finite dimensional $k$-algebra, as for
the blob algebra (see \cite[section 1.1]{marwood} for a definition)
and the Temperley--Lieb algebra (defined
above), we will 
straighten by certain rules, into the $k$-span of a finite subset.
For example we may identify a pseudo-diagram with
certain localised features, such as multiple blobs on a line, 
with a scalar multiple of an otherwise identical diagram
with other features in that locale (fewer blobs, possibly
none, on that line). 

We now proceed to define a specific such straightening (i.e. a finite
target set, and a suitable collection of rules). We have six
parameters, $\dd, \dl, \dr, \kl, \kr, \kk = k_L = k_R$, 
which are all 
elements in the base field $k$. 

Consider the set of 
eight features drawn on the left-hand sides of the 
sub-tables of table~\ref{blobtab}.
We define $B_n'$ to be the set of left-right blob
pseudo-diagrams with $n$ vertices 
at the top and $n$ at the bottom of the diagram
that do \emph{not} have features from this set.
\newpage
\begin{multicols}{4}
$$
\begin{tabular}{|c|}
\hline
\vstrut{1.5em}{3.8em}
$\raisebox{-0.2cm}{\epsfbox{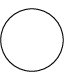}} 
\mapsto \delta$\\
\hline
\vstrut{2.4em}{5em}
$\raisebox{-0.6cm}{\epsfbox{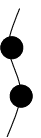}} 
\mapsto \dl \raisebox{-0.6cm}{\epsfbox{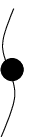}}$ \\
\hline
\end{tabular}
$$\\
$$
\begin{tabular}{|c|}
\hline
\vstrut{2.4em}{5em}
$\raisebox{-0.6cm}{\epsfbox{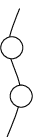}} 
\mapsto \dr \raisebox{-0.6cm}{\epsfbox{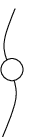}}$ \\
\hline
\vstrut{1.5em}{3.8em}
$\raisebox{-0.2cm}{\epsfbox{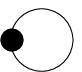}} 
\mapsto \kl$\\
\hline
\end{tabular}
$$ \\
$$
\begin{tabular}{|c|}
\hline
\vstrut{1.5em}{3.8em}
$\raisebox{-0.2cm}{\epsfbox{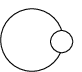}} 
\mapsto \kr$\\
\hline
\vstrut{1.5em}{3.8em}
$\raisebox{-0.2cm}{\epsfbox{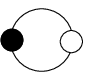}} 
\mapsto \kk$\\
\hline
\end{tabular}
$$\\
$$
\begin{tabular}{|c|}
\hline
\vstrut{2.4em}{5em}
$\raisebox{-0.6cm}{\epsfbox{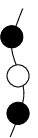}} 
\mapsto k_L\raisebox{-0.6cm}{\epsfbox{lline.eps}}$ \\
\hline
\vstrut{2.4em}{5em}
$\raisebox{-0.6cm}{\epsfbox{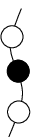}} 
\mapsto k_R\raisebox{-0.6cm}{\epsfbox{rline.eps}}$ \\
\hline
\end{tabular}
$$
\end{multicols}

\begin{table}[ht]
\caption{Table encoding most of the straightening relations for $b_n$.\label{blobtab}}
\end{table}

The set $B_n'$ is finite. We call its elements
\emph{left-right blob diagrams}.

Now define a relation on the $k$-span of all  left-right blob
pseudo-diagrams as follows. If $d,d'$ are scalar multiples of single
diagrams, set
 $d \sim  d'$ if  $d'$ differs from $d$ 
by a substitution from left to right in either sub-table. 
Extend this $k$-linearly. 

A moment's thought makes it clear that to obtain a consistent set of
relations we need $RLRL= k_R RL = k_L RL$, i.e., that $k_L = k_R$.

Another (perhaps longer) moment's thought reveals that 
the $k_L$ relation is only
needed for $n$ odd and the $\kk$ relation is only needed when $n$ is
even.
It turns out to be convenient to set $\kk = k_L=k_R$.

We have the following result.
\begin{prop}[{\cite[section 6.3]{gensymp}}]
The above relations on the $k$-span of left-right blob pseudo-diagrams 
define, with diagram concatenation, 
a finite dimensional algebra,
$b'_n$,
which has a diagram basis $B_n'$. \qed
\end{prop}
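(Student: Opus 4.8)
The plan is to show that the straightening rules, when applied to any left-right blob pseudo-diagram, terminate in a well-defined scalar multiple of a single diagram in $B_n'$, independent of the order in which the rules are applied. This is exactly the statement that diagram concatenation descends to a well-defined, associative multiplication on the $k$-span of $B_n'$. The key observation is that concatenation of underlying (undecorated) pseudo Temperley--Lieb diagrams is already known to be associative, so the only issue is the bookkeeping of blob decorations and loops, together with the scalars $\dd, \dl, \dr, \kl, \kr, \kk$ they produce.

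First I would set up a \emph{reduction system}: each rule in Table~\ref{blobtab} is an oriented rewriting rule that strictly decreases a suitable complexity measure on pseudo-diagrams (for instance, the total number of blobs plus loops, with an appropriate tie-breaking on adjacent-blob configurations). Since every rule reduces this measure and the measure is bounded below, every sequence of reductions terminates, and every pseudo-diagram reduces to \emph{some} element of $B_n'$ (possibly scaled). The substantive point is \emph{confluence}: different choices of which feature to straighten first must yield the same final scalar-times-diagram. Following the standard diamond-lemma (Newman's lemma) strategy, I would reduce this to checking local confluence at each \emph{overlap} of two rules --- the critical pairs. These are configurations where two reducible features share a line segment: for example a line carrying the pattern $\ldots RLRL \ldots$ (where both the $k_L$-rule and the $k_R$-rule might apply), or a line carrying multiple like-blobs adjacent to a loop, or the interaction of $\dl/\dr$ with $\kl/\kr/\kk$. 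For each such overlap one verifies by direct computation that the two reduction orders produce the same scalar; the consistency condition $k_L=k_R$ (forced by $RLRL=k_R RL = k_L RL$) is precisely what makes the critical $RLRL$ overlap confluent, and the choice $\kk = k_L = k_R$ handles the parity interaction between the $\kk$-loop rule and the $k_L,k_R$ straightening rules.

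Once confluence is established, straightening defines a $k$-linear projection $\pi$ from the span of all pseudo-diagrams onto the span of $B_n'$, and one defines the product of two diagrams $d,d'$ in $B_n'$ to be $\pi(d \ast d')$, where $\ast$ is raw concatenation. Associativity of this product then follows from associativity of $\ast$ on pseudo-diagrams together with the fact that $\pi$ is a well-defined algebra-compatible retraction: concretely, $\pi\bigl((d\ast d')\ast d''\bigr)=\pi\bigl(d\ast(d'\ast d'')\bigr)$ because the two raw concatenations are literally equal as pseudo-diagrams and $\pi$ is single-valued. Finiteness of $B_n'$ gives finite dimensionality, and that $B_n'$ is a basis is immediate from the construction since distinct elements of $B_n'$ are inequivalent diagrams and hence $k$-linearly independent.

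The main obstacle I expect is verifying local confluence at the critical pairs in a genuinely exhaustive way: one must be certain that the enumerated overlaps really do cover \emph{all} ways two features of Table~\ref{blobtab} can interfere on a single line or loop, paying particular attention to the parity-dependent cases (the $k_L$-rule being needed only for $n$ odd and the $\kk$-rule only for $n$ even) and to the constraint that blob decorations must remain simultaneously deformable to the correct side of the frame, so that no reduction can create an illegal (non-$L$-exposed or non-$R$-exposed) decoration. Much of the detailed verification is deferred to the diagram calculus of \cite[\S3, \S6]{gensymp}, where the associativity of the decorated concatenation and the exposure-preservation under concatenation are already established; the present proposition is essentially the assembly of those ingredients via the diamond lemma, and the one arithmetic fact that must be checked by hand is the set of scalar identities arising from the critical pairs.
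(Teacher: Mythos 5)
The paper does not actually prove this proposition: it is quoted from \cite[\S 6.3]{gensymp} with a bare \qed, so there is no in-text argument to compare yours against. Your reconstruction via a terminating rewriting system plus Newman's lemma is a sound and standard way to establish the result, and it correctly isolates the two real issues: termination (your measure --- total number of blobs plus loops --- is indeed strictly decreased by every rule in Table~\ref{blobtab}, so no tie-breaking is actually needed) and local confluence at overlaps, with the $LRLR$ overlap forcing $k_L=k_R$ exactly as the paper observes. For what it is worth, the cited source organizes matters differently: \cite[\S 3]{gensymp} first proves associativity of the decorated calculus with decorations in a \emph{free} monoid by general arguments, and the finite-dimensional algebra is then obtained by imposing the relations at the level of the decoration algebra together with loop-evaluation parameters, so the consistency check is pushed onto that algebraic level rather than done feature-by-feature on diagrams. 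Your approach is more hands-on but delivers the same conclusion.

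Two points in your write-up should be tightened. First, your associativity argument only invokes single-valuedness of $\pi$ on the literally equal raw concatenations $(d\ast d')\ast d''$ and $d\ast(d'\ast d'')$; but the induced product on the span of $B_n'$ is $d\cdot d'=\pi(d\ast d')$, so associativity of $\cdot$ additionally requires $\pi\bigl(\pi(x)\ast y\bigr)=\pi(x\ast y)$ and its mirror image. This does hold --- any reduction sequence carrying $x$ to $\pi(x)$ is also a reduction sequence inside $x\ast y$, so confluence applies to $x \ast y$ --- but it is a further application of confluence, not a consequence of associativity of $\ast$ alone, and it is the step that actually makes $\pi$ ``algebra-compatible.'' Second, your enumeration of critical pairs must treat decorations on closed loops as \emph{cyclic} words and must include overlaps between the line rules and the loop-evaluation rules (for instance, a loop carrying $LRLR$ cyclically admits $k_L$- and $k_R$-reductions at several positions, all of which must agree with each other and, after one reduction, with the $\kk$-evaluation); these all check out using $\kk=k_L=k_R$, but they are the overlaps most easily omitted from an ``exhaustive'' list. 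With those additions your argument is complete.
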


We study this algebra by considering the quotient by the 
``topological relation'':
\begin{equation} \label{topquot}
\kappa_{LR} \;\; \raisebox{-0.8cm}{\epsfbox{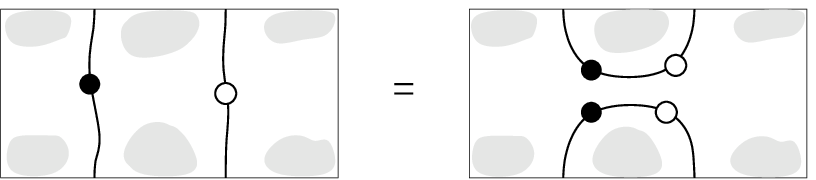}}
\end{equation}
where each
shaded area is shorthand for  subdiagrams that do
not have propagating lines
(a line is called \emph{propagating} if it joins a vertex on the top
of the diagram to one on the bottom of the diagram).
(Note that there is no freedom in choosing the scalar multiple, 
once we require a  relation of this {\em form}.)

We define $B_n$ to be the subset of $B_n'$ that does not
contain diagrams with features as in the right hand side of relation
\eqref{topquot}.

\begin{defn}
We 
define the \emph{symplectic blob algebra}, $b_n$ (or $b_{n}(\dd,
\dl,\dr,\kl,\kr,\kk)$ if we wish to emphasise the parameters)
to be the $k$-algebra with basis $B_{n}$, multiplication defined
via diagram concatenation and relations as in the table above (with
$\kk=k_L=k_R$) and 
with relation \eqref{topquot}.
\end{defn}

That these relations are consistent and that we do obtain an
algebra with basis $B_n$ is proved in \cite[section 6.5]{gensymp}.

We have the following (implicitly assumed in \cite{gensymp}):
\begin{prop}\label{gensblob}
The symplectic blob algebra, $b_{n}$, is
generated by the following diagrams
\begin{multline*} 
e:=\raisebox{-0.4cm}{\epsfbox{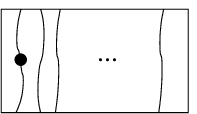}},\  
e_1:=\raisebox{-0.4cm}{\epsfbox{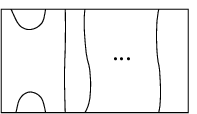}},\ 
e_2:= \raisebox{-0.4cm}{\epsfbox{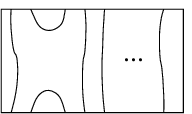}},\ 
\cdots,\ \\
e_{n-1}:=\raisebox{-0.4cm}{\epsfbox{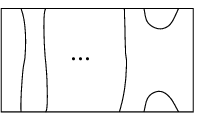}},\ 
f:=\raisebox{-0.4cm}{\epsfbox{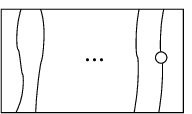}}. 
\end{multline*} 
\end{prop}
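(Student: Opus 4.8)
The plan is to use the fact that $b_n$ has $B_n$ as a $k$-basis, so that it suffices to exhibit every diagram $d \in B_n$ as an element of the subalgebra $A$ generated by $e, e_1, \ldots, e_{n-1}, f$ (the identity diagram, being the unit, is automatically available). The first ingredient is the classical observation that the undecorated \TL\ diagrams are already generated by $e_1, \ldots, e_{n-1}$, i.e.\ $TL_n = \langle e_1, \ldots, e_{n-1}\rangle$; thus the only real content is to show that the blob decorations of a basis diagram can be produced from $e$ and $f$. I would record at the outset the elementary effect of the two blob generators: multiplication by $e$ deposits a single left blob on the line meeting the leftmost vertex, and likewise $f$ deposits a right blob on the line meeting the rightmost vertex, with the straightening rules of Table~\ref{blobtab} controlling what happens when such a blob lands on an already-decorated line.

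The organising principle is the decomposition of each basis diagram through its propagating lines: writing $d = \alpha\,\beta$, where $\alpha$ collects the cups together with the upper portions of the propagating lines and $\beta$ collects the caps together with their lower portions, so that each blob of $d$ sits on exactly one of the two halves. This lets me treat the left-blob decorations and right-blob decorations by a blob-algebra-style argument on each side, and reduces the problem to a single \emph{transport} statement, proved by induction on the number of blobs: if a line $\ell$ of a partially built diagram is $L$-exposed, then there is an explicit word $w$ in $e, e_1, \ldots, e_{n-1}$ whose action reproduces that diagram with one extra left blob on $\ell$ and nothing else changed, and symmetrically for $R$-exposed lines using $f$. The word $w$ is read off from a deformation carrying $\ell$ to the left edge: the $e_i$ factors ``walk'' the creation site from the outermost strand, where $e$ can act, in to $\ell$, and the exposure hypothesis is precisely what guarantees a crossing-free path exists.

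With the transport statement in hand, the assembly is routine. I would build the \TL\ skeletons of $\alpha$ and of $\beta$ as words in the $e_i$, and insert each blob at the stage of the construction at which its supporting line is outermost, peeling blobs off from the outermost inwards so that at every step the relevant line genuinely is $L$- or $R$-exposed. Both halves are thereby realised as products of generators, and hence so is $d = \alpha\beta$.

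I expect the main obstacle to be the transport statement, and in particular the bookkeeping needed to keep every intermediate product inside $B_n'$ and then $B_n$, rather than merely in the span of pseudo-diagrams. Two points demand care. First, the geometric side conditions defining a left-right blob diagram are not captured by ``$\ell$ is $L$-exposed'' alone: one needs all decorated segments to be \emph{simultaneously} deformable to the correct sides without crossing, so when $\ell$ is to carry both a left and a right blob (the decorations $LR$ and $RL$) the order of the transport steps matters and must be chosen to respect this simultaneity. Second, one must verify that when a transported blob meets an existing decoration the reductions of Table~\ref{blobtab} (and in particular the identification $\kl=\kr=\kk$ together with the topological relation~\eqref{topquot}) return the intended basis diagram with coefficient exactly one. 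The delicate issue here is that a spurious loop, doubled blob, or occurrence of the feature on the right of~\eqref{topquot} would introduce a straightening factor such as $\dd$, $\dl$, $\dr$ or $\kk$ that one would then have to divide out; since such factors may be non-invertible, even zero, for special parameter values, the construction must be arranged so that these factors never arise rather than relying on cancelling them. Checking that the moves can always be so arranged is where the parameter constraints recorded above Table~\ref{blobtab} and the precise definition of $B_n$ are used.
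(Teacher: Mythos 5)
Your proposal is correct and follows essentially the same route as the paper: an induction on the number of decorations, with the undecorated Temperley--Lieb skeleton generated by $e_1,\ldots,e_{n-1}$ and each blob peeled off by writing the diagram as a product involving $e$ (or $f$) conjugated by a word in the $e_i$ that carries the deposit site from the boundary in to the target line --- exactly the paper's ``wiggling'' step with middle factor $e_1ee_2e_1$. The only organisational differences are your decomposition through the propagating lines and your one-sided transport lemma in place of the paper's two-sided sandwich; your cautionary remarks about keeping every intermediate product free of straightening factors address a point the paper's own sketch leaves implicit.
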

\begin{proof}
We may argue in a similar fashion as in appendix A of \cite{gensymp}
but by now inducing on the number of decorations. 
If a diagram $d$ has no decorations then the diagram is a Temperley--Lieb
diagram and the result follows.

So now assume that we have a diagram $d$ with 
$m$ decorations 
and that (for the sake of illustration) that there is a left blob --- we
would use the dual reduction in the case of a right blob.
We claim that we may use the same procedure as in the $l=0$ case
of \cite[appendix A]{gensymp}. 
If there is a decorated line starting in the first position, then we
can decompose the diagram into a product of $e$ then a diagram with one
fewer decoration. If there is no such line then take the first line
decorated with a black blob and do the same reduction as in 
\cite[appendix A]{gensymp}. For example, taking a diagram with a line
with both a left and right blob on it:
$$\epsfbox{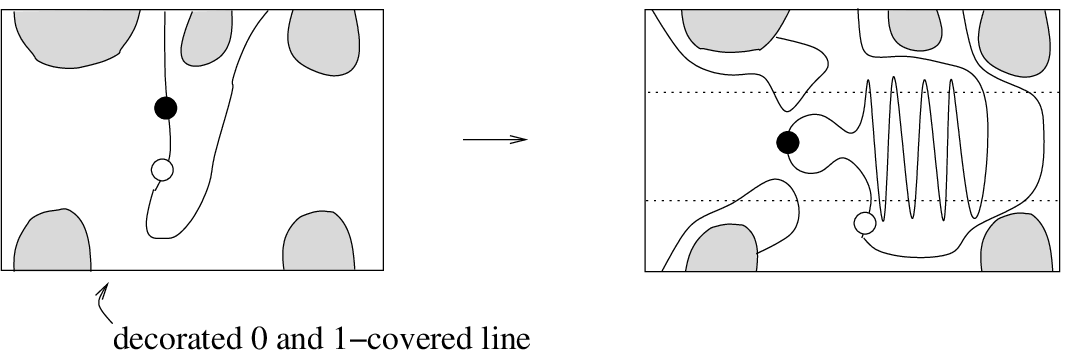}$$
The white blobs can either be moved into the shaded regions or above
or below the horizontal dotted lines. The middle region (after
``wiggling'' the line enough times)  is then the
product $e_1ee_2e_1$. The outside diagrams have strictly fewer
than $m$ decorations and hence the result follows by induction. 
\end{proof}

\section{Presenting the symplectic blob algebra}\label{sect:pres}
\newcommand{\TBTL}{two boundary Temperley--Lieb algebra}
\newcommand{\sblob}{symplectic blob algebra}

We start by defining an algebra by a presentation that is a direct
generalisation of the well-known
presentation for the (ordinary) Temperley--Lieb
algebra. 

\begin{defn}\label{seca5}
Fix $n \ge 1$. 
Let $S_n = \{E_0, E_1, \ldots, E_n\}$, and let $S_n^*$ be the free monoid on $S_n$.
Define the \emph{commutation monoid} $M_n$ to be the quotient of $S_n^*$ by the
relations $$
E_i E_j \equiv E_j E_i \text{ for all } 0 \leq i, j \leq n \text{ with } 
|i - j| > 1
.$$
\end{defn}

\begin{defn}\label{2BTL pres}
Let $P_n = P_n(\dd,\dl,\dr,\kl,\kr,\kk)$ 
be the quotient of the $k$-monoid-algebra of $M_n$  
by the following relations:
\begin{align*}
E_0^2 &= \delta_L E_0,
&
E_1E_0E_1 &= \kappa_L E_1,
\\
E_i^2 &= \delta E_i\quad\mbox{for }1 \le i \le n-1,
&
E_iE_{i+1}E_i &= E_i \quad\mbox{for }1 \le i \le n-2,
\\
E_{n}^2 &= \delta_R E_n,
&
E_{i+1}E_{i}E_{i+1} &= E_{i+1}\quad\mbox{for }1 \le i \le n-2,
\\
&
&
E_{n-1}E_nE_{n-1} &= \kappa_R E_{n-1},
\\
IJI &= \kappa_{LR} I,
&
JIJ &= \kappa_{LR} J,
\end{align*}
where 
$$I= \begin{cases} 
E_1 E_3 \cdots E_{2m-1} &\mbox{if $n=2m$},\\
E_1 E_3 \cdots E_{2m-1} E_{2m+1} &\mbox{if $n=2m +1$},\\
\end{cases}$$
$$J= \begin{cases} 
E_0 E_2 \cdots E_{2m-2} E_{2m} &\mbox{if $n=2m$},\\
E_0 E_2 \cdots E_{2m}  &\mbox{if $n=2m +1$}.\\
\end{cases}$$
\end{defn}
Note $I=E_1$ and $J=E_0$ if $n=1$.
We will sometimes write  $E$ for $E_0$ and $F$ for $E_n$.

\begin{rem}\label{unnumb}
The presentation obtained by omitting the last two relations 
($IJI=\kk I$ and $JIJ=\kk J$) generalises the presentation for the
blob algebra (sometimes known as the one-boundary  Temperley--Lieb
algebra, because of its role in modelling 
two-dimensional Statistical Mechanical
systems with variable boundary conditions at one boundary). For this reason the 
generalisation is sometimes known as
 the two-boundary Temperley--Lieb
algebra. It also coincides,
in Hecke algebra representation theory, with a 
Temperley--Lieb algebra of type-$\tilde{C}$ (see \cite{gensymp}). 
\end{rem}

\begin{thm}\label{thmmain}
Suppose $\dd,\dl,\dr,\kl,\kr,\kk$ are invertible,
then
the symplectic blob algebra 
$b_{n}$  
is isomorphic to the algebra
$P_n$ via an isomorphism 
$$
\phi: P_n \rightarrow b_{n}
$$ 
induced by $E_0\mapsto e$, $E_1\mapsto e_1$,
$\ldots$, $E_{n-1}\mapsto e_{n-1}$ and $E_n\mapsto f$.
\end{thm}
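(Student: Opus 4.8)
The plan is to show that the map $\phi\colon P_n\to b_n$ defined on generators is a well-defined algebra homomorphism, that it is surjective, and finally that it is injective. Surjectivity is immediate from Proposition~\ref{gensblob}, since the images $e, e_1,\ldots,e_{n-1},f$ generate $b_n$, and well-definedness requires only that the diagrammatic generators satisfy in $b_n$ the relations imposed on the $E_i$ in Definition~\ref{2BTL pres}. The latter is a finite diagram check: the quadratic relations $E_i^2=\delta E_i$ (and the $\delta_L,\delta_R$ variants) are the loop-removal rules of table~\ref{blobtab}, the braid-like relations $E_iE_{i+1}E_i=E_i$ are the ordinary Temperley--Lieb relations, the relations $E_1E_0E_1=\kappa_L E_1$ and $E_{n-1}E_nE_{n-1}=\kappa_R E_{n-1}$ are the blob loop-removal rules, the commutation relations hold because distant generators act on disjoint strands, and the two remaining relations $IJI=\kk I$, $JIJ=\kk J$ are exactly the topological relation \eqref{topquot} read off after concatenating the alternating products $I$ and $J$. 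I would verify each of these by drawing the relevant concatenations and applying the straightening rules; none is individually difficult, but assembling them cleanly is the bookkeeping cost of this step.

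The substance of the theorem is \emph{injectivity}, and here one cannot simply compare dimensions, since, as the introduction stresses, no closed formula for $r(n)=\dim b_n$ (nor for $\dim P_n$) is available. The strategy I would adopt, following the approach of \cite{G35} flagged in the introduction, is to produce a spanning set for $P_n$ that maps bijectively onto the diagram basis $B_n$ of $b_n$. Concretely, for each diagram $d\in B_n$ I would fix a canonical word $w_d$ in the $E_i$ (a normal form) with $\phi(\bar{w}_d)=d$, where $\bar{w}_d$ denotes its image in $P_n$; the key reduction lemma to prove is then that the relations of $P_n$ suffice to rewrite \emph{every} element of $P_n$ into the $k$-span of the set $\{\bar{w}_d : d\in B_n\}$. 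Granting this, the composite $k\text{-span}\{\bar{w}_d\}\hookrightarrow P_n\xrightarrow{\phi} b_n$ sends the spanning set bijectively to a basis, which forces $\{\bar{w}_d\}$ to be linearly independent in $P_n$ and forces $\phi$ to be injective; combined with surjectivity this gives the isomorphism.

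Establishing the reduction lemma is where the real work lies, and it is the step I expect to be the main obstacle. The difficulty is that the alternating products $I$ and $J$ entangle the ``left'' boundary generator $E_0$ and the ``right'' boundary generator $E_n$ across the whole diagram, so the rewriting cannot be localised strand-by-strand as in the ordinary Temperley--Lieb case; the relations $IJI=\kk I$ and $JIJ=\kk J$ must be invoked to control the length of alternating $I$-$J$ subwords, in direct analogy with the two-sided reduction controlling long alternating products of the two flavours of generators in the type-$\tilde{C}$ / two-boundary setting of Remark~\ref{unnumb}. I would organise the rewriting as an induction, most naturally on the number of blob decorations (mirroring the induction in the proof of Proposition~\ref{gensblob}) together with a secondary induction on word length, showing at each stage that any reducible word either shortens, loses a decoration, or is rewritten into canonical form; the invertibility hypothesis on all six parameters guarantees that the scalars produced by loop removal never annihilate a term and hence that the bijection with $B_n$ is preserved. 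The termination and confluence of this rewriting — ensuring that distinct canonical words really correspond to distinct diagrams and that no further relation collapses the spanning set — is the delicate heart of the argument.
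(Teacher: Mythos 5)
Your opening paragraph (well-definedness plus surjectivity via Proposition~\ref{gensblob}) and your overall architecture for injectivity --- exhibit a spanning set of $P_n$ that $\phi$ carries bijectively onto the diagram basis $B_n$, and conclude linear independence upstairs --- are exactly the right frame, and they match the paper's strategy. But the entire mathematical content of the theorem lives inside your ``reduction lemma,'' and you have not proved it; you have only named it and correctly observed that it is the hard part. Worse, the route you sketch for it --- establishing termination and confluence of a rewriting system onto a set of canonical words $w_d$, by induction on the number of blob decorations with a secondary induction on word length --- is not the one that works, and you give no mechanism for carrying it out. Confluence of this rewriting system is precisely what one cannot check locally: the relations $IJI=\kk I$ and $JIJ=\kk J$ are global (they involve every generator at once), and applying relations can change the number of decorations non-monotonically, so neither of your induction parameters obviously decreases.

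The paper avoids proving confluence altogether. Instead it works with \emph{all} reduced monomials (not one canonical word per diagram) and proves directly that any two reduced monomials mapping to the same diagram are already equal in $P_n$, by induction on length. The machinery that makes this work, and which is absent from your proposal, is: (a) the Cartier--Foata normal form for the commutation monoid (Proposition~\ref{seca6}) and the notion of left/right reducibility, culminating in the structural Proposition~\ref{secc3} that a reduced monomial which is neither left nor right reducible is a product of commuting generators or one of $IJ$, $JI$ --- this supplies both the base case and the inductive step; (b) diagrammatic invariants (Lemmas~\ref{seca4}, \ref{secb3}, \ref{secb4}) that read the left and right descent sets of a reduced monomial off its image diagram, so that the \emph{diagram algebra itself} is used as the tool to detect when a leading factor $st$ can be stripped from both $\bu$ and $\bu'$ simultaneously (Lemma~\ref{secc5}), allowing the induction on length to close; and (c) Lemma~\ref{secc4}, which guarantees reduced monomials map to genuine basis diagrams with coefficient $1$, so no scalars interfere. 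Finally, your remark that invertibility of the parameters merely ``guarantees that scalars never annihilate a term'' misses how the hypothesis is actually used: one rescales $E_0$ and $E_n$ to normalise $\kl=\kr=1$, and this normalisation is needed in the inductive step (e.g.\ so that $E_1E_0E_1=E_1$ exactly, letting $t\bu=t\bv$ without spurious scalars). As it stands, the proposal is a correct plan with the proof of its central lemma missing.
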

It is straightforward to check that the generators already given for 
the symplectic
blob algebra satisfy the $P_n$ relations. Thus the map $\phi$ in the
theorem is a surjective homomorphism and hence we 
need only to prove injectivity.
The rest of this paper is devoted to proving this theorem.

\section{Definitions associated to the monoid $M_n$}

Two monomials $\bu,\bu'$ in the generators $S_n$ are said to be \emph{commutation equivalent}
if $\bu\equiv\bu'$ 
in $M_n$. 
The {\it commutation class}, $\overline{\bu}$, of a 
monomial $\bu$ consists of the monomials that are commutation equivalent to it.

The \emph{left descent set} (respectively, \emph{right descent set})
of a monomial $\bu$
consists of all the initial (respectively, terminal) letters of the 
elements of $\overline{\bu}$.  We denote these sets by $\ldescent{\bu}$ and
$\rdescent{\bu}$, respectively.

\begin{defn}\label{seca1}
A \emph{reduced monomial} is a monomial $\bu$ in the generators $S_n$
such that no $\bu' \in \overline{\bu}$ can be expressed
as a scalar multiple of a strictly shorter monomial
using the relations in Definition~\ref{2BTL pres}. 
\\
If we have $\bu = \bu_1 s \bu_2 s \bu_3$ for some generator
$s$, then the occurrences of $s$ in $\bu$ are said to be \emph{consecutive} if
$\bu_2$ contains no occurrence of $s$.
\end{defn}

\begin{defn}\label{seca2}
Two monomials in the generators, $\bu$ and $\bu'$, are said to be
\emph{weakly equivalent} if $\bu$ can be transformed into a nonzero multiple of
$\bu'$ by applying finitely many relations in $P_n$.  
\\
In this situation, we also say
that $D$ and $D'$ are weakly equivalent, where $D$ and $D'$ are the diagrams
equal to $\phi(\bu)$ and $\phi(\bu')$, respectively.  
If $P$ is a property that 
diagrams may or may not possess, then we say $P$ is \emph{invariant under weak
equivalence} if, whenever $D$ and $D'$ are weakly equivalent diagrams, then
$D$ has $P$ if and only if $D'$ has $P$.
\end{defn}

\begin{defn}\label{seca3}
Let $D$ be a diagram.  For $g \in \{L, R\}$ and $$
k \in \{1, \ldots, n, 1', \ldots, n'\}
,$$ we say that $D$ is \emph{$g$-decorated at the point $k$} if (a) the edge
$x$ connected to $k$ has a decoration of type $g$, and (b) the decoration of
$x$ mentioned in (a) is closer to point $k$ than any other decoration on $x$.
\end{defn}

In the sequel, we will sometimes invoke Lemma \ref{seca4} without explicit comment.

\begin{lem}\label{seca4}
The following properties of diagrams are invariant under weak equivalence:
\begin{enumerate}
\item[(i)]{the property of being $L$-decorated at the point $k$;}
\item[(ii)]{the property of being $R$-decorated at the point $k$;}
\item[(iii)]{for fixed $1 \leq i < n$, the property of points $i$ 
and $(i+1)$ being connected by an undecorated edge;}
\item[(iv)]{for fixed $1 \leq i < n$, the property of points $i'$ 
and $(i+1)'$ being connected by an undecorated edge.}
\end{enumerate}
\end{lem}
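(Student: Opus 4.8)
The plan is to prove each of the four invariance properties by checking that they are preserved under the individual relations of $P_n$, since weak equivalence is generated by finitely many such relation-applications. The key structural observation is that each relation in Definition~\ref{2BTL pres} is \emph{local}: it modifies the diagram only in a bounded region (a single edge with its decorations, or the concatenation locale of two or three generators), while leaving the global connectivity and the near-endpoint decoration data elsewhere untouched. So the strategy is to set up a careful case analysis over the relation types and verify that none of them can create or destroy the property in question at a \emph{fixed} point $k$ (for (i),(ii)) or a fixed pair $\{i,i+1\}$ (for (iii),(iv)).

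First I would treat properties (i) and (ii), which are dual, so it suffices to argue (i). The content of $L$-decoration at $k$ is that the edge meeting $k$ carries an $L$-blob as its nearest decoration to $k$. I would observe that the relations fall into three groups. The \emph{quadratic and loop-removing} relations ($E_i^2=\delta E_i$, $E_0^2=\dl E_0$, $E_n^2=\dr E_n$, and the $\kl,\kr,\kk$ loop relations) only delete closed loops or merge two parallel propagating strands, replacing a feature by a scalar; one checks from the straightening table that such a move never alters which blob is nearest to a boundary vertex. The \emph{braid-like} relations ($E_iE_{i+1}E_i=E_i$ etc.) rearrange undecorated strands in the interior away from the blob-carrying edges, so the nearest decoration at any point $k$ is unchanged. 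The \emph{blob-absorption} relations $E_1E_0E_1=\kl E_1$, $E_{n-1}E_nE_{n-1}=\kr E_{n-1}$, and $IJI=\kk I$, $JIJ=\kk J$ are the delicate ones: here a blob really is removed, but I would show that the removed blob is never the nearest decoration to the specific boundary vertex $k$ under consideration---rather it sits on an interior arc or is screened by a closer blob of the same colour---so the property at $k$ survives. This dovetails with the straightening rules $k_L$ and $k_R$ of Table~\ref{blobtab}, which govern exactly how stacked $L$/$R$ blobs collapse.

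For properties (iii) and (iv), again dual, I would argue (iii): that $i$ and $i+1$ are joined by an \emph{undecorated} edge. The generator $e_i$ is precisely the cap-cup diagram connecting $i$ to $i+1$, so this property records a specific local undecorated cap at the top of the diagram. I would verify that each relation, being local, either acts entirely away from this cap (leaving it intact) or, when it does act near positions $i,i+1$, does so only via loop removal or braid moves that cannot introduce a decoration onto that cap nor sever it---noting in particular that the blob-absorption relations only ever place or remove blobs on $L$- or $R$-exposed lines, whereas an interior cap joining adjacent top vertices is neither, so it cannot acquire a blob.

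The main obstacle I anticipate is the blob-absorption step in (i)/(ii), specifically the relations $E_1E_0E_1=\kl E_1$ and the two $\kk$-relations $IJI=\kk I$, $JIJ=\kk J$. These genuinely delete a blob, so one must argue with care that the deleted blob lies on an edge that is \emph{not} the nearest-decoration edge at the fixed point $k$. The cleanest way to handle this is to track, through the concatenation defining each relation, exactly which strand the absorbed blob sits on and to confirm it is an interior or screened strand; the ``closer than any other decoration'' clause in Definition~\ref{seca3} is what makes this work, since any surviving nearer blob of the correct colour preserves the property. Once this locality-plus-screening analysis is in place for the three blob-absorbing relation families, the remaining cases are routine checks that loop-removal and braid relations are decoration-neutral near $k$ and near $\{i,i+1\}$.
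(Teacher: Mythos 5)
Your overall strategy --- reduce to checking that each individual relation (equivalently, each diagrammatic straightening move) preserves the four properties --- is exactly the paper's approach; the published proof is essentially the single observation that ``it is enough to check that each of these properties is respected by each type of diagrammatic reduction, because the diagrammatic algebra is a homomorphic image of the algebra given by the monomial presentation,'' followed by the assertion that the checks present no problems. So the plan is sound. However, your execution contains one concretely false step and leaves the genuinely delicate case unresolved. The false step is the claim, used to dispose of parts (iii) and (iv), that ``an interior cap joining adjacent top vertices is neither [$L$-exposed nor $R$-exposed], so it cannot acquire a blob.'' This is wrong: the cap joining top vertices $1$ and $2$ is always $L$-exposed (and the cap joining $n-1$ and $n$ is $R$-exposed), and indeed the whole mechanism of Lemmas \ref{secc4} and \ref{secc5} is that left-multiplying $E_1\bv$ by $E_0$ decorates precisely such a cap. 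So you cannot argue that these caps are immune to decoration on exposure grounds; you must instead track, for each relation, where the decorations introduced or removed by the longer side of the relation actually land (on closed loops, or on arcs other than the $\{i,i+1\}$ cap).

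The unresolved case is the one the paper itself singles out: the topological relation \eqref{topquot} (equivalently the $IJI=\kk I$, $JIJ=\kk J$ relations for $n>2$), which removes an $L$--$R$ pair of decorations from arcs that need not be closed loops. This is precisely why the word ``undecorated'' cannot be dropped from parts (iii) and (iv) --- decoration patterns on a connecting edge are \emph{not} invariant in general --- and it is also the only place where your ``screening'' argument for (i) and (ii) requires real verification. You correctly identify this as the main obstacle but then only describe what a proof would do (``track \dots exactly which strand the absorbed blob sits on and confirm it is an interior or screened strand'') without carrying out the check. Since this is the entire non-routine content of the lemma, the proposal as written has a gap there: to close it you need to examine the concrete pseudo-diagrams of $IJI$ and $JIJ$ (and of relation \eqref{topquot}) and verify that the removed $L$ and $R$ decorations are never the nearest decoration to a boundary point $k$, and never sit on an undecorated edge joining $i$ and $i+1$.
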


\begin{proof}
It is enough to check that each of these properties is respected by each type
of diagrammatic reduction, because the diagrammatic algebra is a homomorphic
image of the algebra given by the monomial presentation.
This presents no problems, but notice that
the term ``undecorated'' cannot be removed from parts (iii) and (iv), because
of the topological relation.
\end{proof}

Elements of the commutation monoid 
$M_n$ have the following normal form, established in \cite{CF}.

\begin{prop}[Cartier--Foata normal form]\label{seca6} 
Let $\bs$ be an element of the commutation monoid $M_n$.  Then $\bs$ has
a unique factorization in $M_n$ of the form 
$$
\bs = \bs_1 \bs_2 \cdots \bs_p
$$ 
such that each $\bs_i$ is a product of distinct commuting elements of
$S_n$, and such that for each $1 \leq j < p$ and each generator $t \in S_n$
occurring in $\bs_{j+1}$, there is a generator $s \in S_n$ occurring in
$\bs_j$ such that $st \ne ts$ or $s=t$.
\end{prop}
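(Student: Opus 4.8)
The plan is to follow the classical route for partially commutative (trace) monoids: establish existence of the factorization by a greedy ``layer extraction'', and establish uniqueness by showing that each layer is forced by $\bs$ itself. The one structural input I would isolate first is a projection (Levi-type) lemma for $M_n$: two monomials $\bu,\bu'$ satisfy $\bu\equiv\bu'$ in $M_n$ if and only if they have the same multiset of letters and, for every pair of generators $E_i,E_j$ with $|i-j|\le 1$ (the non-commuting pairs), the subword of $\bu$ on $\{E_i,E_j\}$ equals the corresponding subword of $\bu'$. This holds because the defining relations of Definition~\ref{seca5} only ever transpose a commuting pair, and so preserve both the multiset of letters and the relative order of each non-commuting pair. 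From this lemma I would deduce two facts used repeatedly: $M_n$ is cancellative, and if $s,t\in\ldescent{\bs}$ with $s\ne t$ then $s$ and $t$ commute and $st$ is a prefix of $\bs$ (otherwise the $\{s,t\}$-subwords of two representatives of $\bs$ would begin with different letters, violating the lemma).

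For existence, given a nonzero $\bs$ I would take $\bs_1$ to be the product of the generators in the left descent set $\ldescent{\bs}$. Being a set of generators these are distinct, and by the commuting-prefix consequence above they pairwise commute, so $\bs_1$ is a legitimate ``layer'' and a short induction pulling each initial letter to the front shows $\bs=\bs_1\bs'$ in $M_n$. Since $\bs'$ is strictly shorter, induction on word length yields $\bs'=\bs_2\cdots\bs_p$ with each $\bs_i$ a product of distinct commuting generators. The maximality condition between consecutive layers is then automatic from the construction: if some $t$ occurring in $\bs_{j+1}$ were to commute with, and differ from, every generator of $\bs_j$, then $t$ would already be an initial letter at the previous stage and hence would lie in $\bs_j$, a contradiction; so some $s$ occurring in $\bs_j$ satisfies $s=t$ or $st\ne ts$.

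For uniqueness I would show that the first layer of any valid factorization $\bs=\bs_1\cdots\bs_p$ must equal $\ldescent{\bs}$, whence it is determined by $\bs$ alone. Every letter occurring in $\bs_1$ is plainly an initial letter, so $\bs_1\subseteq\ldescent{\bs}$. Conversely, suppose $t\in\ldescent{\bs}$ and consider the leftmost occurrence of $t$ in the factorization, say in $\bs_j$; I claim $j=1$. If $j\ge 2$, the maximality hypothesis supplies a generator $s$ occurring in $\bs_{j-1}$ with $s=t$ or $st\ne ts$. The case $s=t$ contradicts the leftmostness of the chosen occurrence, while the case $st\ne ts$ forces $t$ to be unable to commute past the copy of $s$ lying in an earlier layer, contradicting $t\in\ldescent{\bs}$. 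Hence $j=1$ and $\ldescent{\bs}\subseteq\bs_1$, so $\bs_1=\ldescent{\bs}$. With $\bs_1$ pinned down, cancellativity produces a well-defined $\bs'$ with $\bs=\bs_1\bs'$, and induction on length completes the uniqueness.

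The main obstacle is the projection lemma and its bookkeeping rather than the inductions, which are routine once the lemma is in hand. In particular one must take care that a single generator may occur many times in $\bs$, so the layers are governed by \emph{first} occurrences and the distinctness of generators within a layer has to be argued (via the commuting-prefix consequence) rather than assumed. Because $M_n$ has the very transparent commutation relations of Definition~\ref{seca5}---only indices with $|i-j|\le 1$ fail to commute---both the projection lemma and its consequences are comparatively easy to make precise, and I expect the argument to go through cleanly; alternatively one may simply invoke the general result of \cite{CF}.
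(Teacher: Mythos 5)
The paper does not actually prove Proposition~\ref{seca6}: it imports the result from \cite{CF} and merely records, in Remark~\ref{seca7}, the inductive description of the normal form (first layer $=$ product of $\ldescent{\bs}$, then cancel and recurse). Your proposal supplies a genuine proof along the classical lines, and its skeleton agrees exactly with that remark: you extract $\bs_1$ as the product of the left descent set, use cancellativity to peel it off, and prove uniqueness by showing any valid first layer must coincide with $\ldescent{\bs}$. The existence induction, the verification of the maximality condition between consecutive layers, and the uniqueness argument via the leftmost occurrence of $t$ are all correct. What your approach buys, relative to the paper's citation, is a self-contained argument whose only real input is the projection (Levi/Cori--Perrin) lemma for the trace monoid $M_n$. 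The one point to be careful about: your stated justification of that lemma (``the relations only transpose commuting pairs, hence preserve the multiset and the order of each non-commuting pair'') establishes only the easy direction, that commutation equivalence implies equal projections. Everything you subsequently lean on --- cancellativity, the fact that $\bs\equiv\bs_1\bs'$ once $\ldescent{\bs}$ is known, and the characterisation of $\ldescent{\bs}$ inside an arbitrary representative --- needs the converse, which requires its own induction on word length (move the first letter of one word to the front of the other and cancel). You flag this as the main obstacle and note that one may instead invoke \cite{CF}, which is precisely what the paper does, so this is an acknowledged omission rather than an error; but as written the substantive half of your key lemma is asserted rather than proved.
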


\begin{rem}\label{seca7}
The Cartier--Foata normal form may be defined inductively, as follows.  
Let $\bs_1$ be the product of
the elements in $\ldescent{\bs}$.  Since $M_n$ is a cancellative monoid,
there is a unique element $\bs' \in M_n$ with $\bs = \bs_1 \bs'$.  If $$
\bs' = \bs_2 \cdots \bs_p
$$ is the Cartier--Foata normal form of $\bs'$, then $$
\bs_1 \bs_2 \cdots \bs_p
$$ is the Cartier--Foata normal form of $\bs$.
\end{rem}

\begin{defn}\label{secc1}
Let $\bu$ be a reduced monomial in the generators $E_0, \ldots, E_n$.  We say
that $\bu$ is \emph{left reducible} (respectively, \emph{right reducible})
if it is commutation equivalent to a monomial of the form $\bu' = st \bv$ 
(respectively, $\bu' = \bv ts$), where $s$ and $t$ are noncommuting
generators and $t \not\in \{E_0, E_n\}$.  In this situation, we say that
$\bu$ is left (respectively, right) reducible via $s$ to $t \bv$ (respectively,
to $\bv t$).
\end{defn}

\section{Preparatory lemmas}   

The following result is similar to \cite[Lemma 5.3]{G35}, but we give
a complete argument here because the proof in
\cite{G35} contains a mistake (we thank D. C. Ernst for pointing this out).

\begin{lem}\label{secc2}
Suppose that $\bs \in M_n$ corresponds to a reduced monomial, 
and let $\bs_1 \bs_2 \cdots \bs_p$ be the Cartier--Foata
normal form of $\bs$.  Suppose also that $\bs$ is not left reducible.
Then, for $1 \leq i < p$ and $0 \leq j \leq n$, the following hold:
\begin{enumerate}
\item[(i)]
{if $E_0$ occurs in $\bs_{i+1}$, then $E_1$ occurs in $\bs_i$;}
\item[(ii)]
{if $E_n$ occurs in $\bs_{i+1}$, then $E_{n-1}$ occurs in $\bs_i$;}
\item[(iii)]
{if $j \not\in \{0, n\}$ and $E_j$ occurs in $\bs_{i+1}$, then both 
$E_{j-1}$ and $E_{j+1}$ occur in $\bs_i$.}
\end{enumerate}
\end{lem}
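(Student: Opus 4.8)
The plan is to prove all three parts together by induction on $i$, drawing on three facts: the defining property of the Cartier--Foata normal form, reducedness of $\bs$, and---only for the base case of part~(iii)---the hypothesis that $\bs$ is not left reducible. Two preliminary observations will be used constantly. First, if $E_j$ occurs in $\bs_{i+1}$ then $E_j$ does not occur in $\bs_i$: since each block is a product of mutually commuting generators, two such copies could be slid to adjacent ends of their blocks to create the consecutive factor $E_jE_j$, which the relevant square relation ($E_0^2=\dl E_0$, $E_j^2=\dd E_j$ or $E_n^2=\dr E_n$) shortens, contradicting reducedness. Second, by the Cartier--Foata condition some generator $s$ in $\bs_i$ satisfies $s=E_j$ or $sE_j\neq E_js$; as the only generators failing to commute with $E_j$ are $E_{j\pm1}$, and $s=E_j$ is excluded by the first observation, at least one neighbour of $E_j$ lies in $\bs_i$. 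Parts~(i) and~(ii) follow immediately, with no induction and no appeal to left reducibility, since $E_0$ and $E_n$ each have a single neighbour.

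For part~(iii), fix $0<j<n$ with $E_j$ in $\bs_{i+1}$; I must upgrade ``at least one neighbour in $\bs_i$'' to ``both''. Suppose instead that exactly one neighbour occurs, say $E_{j-1}\in\bs_i$ and $E_{j+1}\notin\bs_i$ (the opposite case being symmetric). In the base case $i=1$ I would show $\bs$ is left reducible, a contradiction: as $\bs_1=\ldescent{\bs}$ is a commuting block, I can place $E_{j-1}$ at its right end and $E_j$ at the left end of $\bs_2$, and since every other letter of $\bs_1$ commutes with both $E_{j-1}$ and $E_j$ (it is distinct from $E_{j-1}$, while $E_j$ and $E_{j+1}$ are absent from $\bs_1$), I may slide these out to obtain $\bs\equiv E_{j-1}E_j\bv$. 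Because $E_j\notin\{E_0,E_n\}$, this exhibits $\bs$ as left reducible.

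For the step $i\ge2$ I would instead manufacture a length-reducing factor. The inductive hypothesis applied to $E_{j-1}\in\bs_i$ forces a copy of $E_j$ into $\bs_{i-1}$---by part~(iii) at level $i-1$ if $j-1$ is interior, and by part~(i) if $j-1=0$. Now $E_j$ occurs in both $\bs_{i-1}$ and $\bs_{i+1}$, while $E_{j-1}\in\bs_i$ and $\bs_i$ contains neither $E_j$ (by the first observation) nor $E_{j+1}$ (by the case assumption). Sliding within the commuting blocks then brings the two copies of $E_j$ and the intervening $E_{j-1}$ into the consecutive factor $E_jE_{j-1}E_j$, which is shortened by $E_jE_{j-1}E_j=E_j$ (or by $E_1E_0E_1=\kl E_1$ when $j=1$), again contradicting reducedness. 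The symmetric case, $E_{j+1}\in\bs_i$ and $E_{j-1}\notin\bs_i$, uses parts~(iii) and~(ii) at level $i-1$ together with the relations $E_jE_{j+1}E_j=E_j$ and $E_{n-1}E_nE_{n-1}=\kr E_{n-1}$.

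I expect the inductive step of part~(iii) to be the main obstacle. The naive strategy of pushing $E_j$ to the front, which works in the base case, fails for $i\ge2$ precisely because the inductive hypothesis plants a copy of $E_j$ in $\bs_{i-1}$ that blocks the path. The resolution is to turn this obstruction to advantage: the blocking $E_j$ in $\bs_{i-1}$, the neighbour in $\bs_i$, and the copy in $\bs_{i+1}$ are exactly the three letters of a braid relation, so the obstruction is what produces the contradiction. The only other care required is routine bookkeeping at the boundary indices $j=1$ and $j=n-1$, where the braid relations carry the scalars $\kl$ and $\kr$ in place of $1$.
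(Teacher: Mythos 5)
Your proposal is correct and follows essentially the same route as the paper: parts (i) and (ii) come directly from the Cartier--Foata condition together with the square relations, the base case of (iii) uses the no-left-reducibility hypothesis, and the inductive step uses the inductive hypothesis (or parts (i)/(ii) at the boundary indices) to plant a copy of $E_j$ in $\bs_{i-1}$ and then extracts a forbidden consecutive factor $E_jE_{j\pm 1}E_j$. The only differences are organizational (you isolate the ``no repeated generator in adjacent blocks'' observation up front and split cases by which neighbour is present, rather than by whether $j\in\{1,n-1\}$).
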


\begin{proof}
The assertions of (i) and (ii) are almost immediate from properties of the normal
form, because $E_1$ (respectively, $E_{n-1}$) is the only generator not
commuting with $E_0$ (respectively, $E_n$). We need only consider the
other 
alternative of $E_0$ (respectively, $E_n$) being in $\bs_i$.
If $E_0$ occurs in $\bs_i$
then this moves to the end of the $\bs_i$ which then cancels with the
$E_0$ from the $\bs_{i+1}$ contradicting the assumption that $\bs$ is
not reducible.

We will now prove (iii) 
by induction on $i$.  Suppose first that $i = 1$.

Suppose that $j \not\in \{0, n\}$ and that $E_j$ occurs in $\bs_2$.  
By definition of the normal form, there must be a generator $s \in \bs_1$ 
that does not commute with $E_j$ or $s=E_j$.  If $s=E_j$ then $\bs$ is
reducible as before. If $s \ne E_j$ then $s$ cannot be the only generator
that does not commute with $E_j$, or 
$\bs$ would be left reducible via $s$.
Since the only generators not
commuting with $E_j$ are $E_{j-1}$ and $E_{j+1}$, these must both
occur in $\bs_1$.

Suppose now that the statement is known to be true for $i < N$, and
let $i = N \geq 2$.
Suppose also that $j \not\in \{0, n\}$ and that $E_j$ occurs in $\bs_{N+1}$.  
As in the base case, there must be at least one generator $s$ occurring 
in $\bs_N$ that does not commute with $E_j$.  

Let us first consider the case where $j \not\in \{1, n-1\}$, and write
$s = E_k$ for some $0 \leq k \leq n$.  The restrictions on $j$ means
that $k \not \in \{0, n\}$ and so that
we cannot have $E_j E_k E_j$ occurring as a subword of any reduced monomial.
However, $E_j$ occurs in $\bs_{N-1}$ by the inductive hypothesis, and this
is only possible if there is another generator, $s'$, in $\bs_N$ that does not 
commute with $E_j$.  This implies that $\{s', E_k\} = \{E_{j-1}, E_{j+1}\}$,
as required.

Now suppose that $j = 1$ (the case $j = n-1$ follows by a symmetrical
argument).  If both $E_0$ and $E_2$ occur in $\bs_N$, then
we are done.  If $E_2$ occurs in $\bs_N$ but $E_0$ does not, then the
argument of the previous paragraph applies.  Suppose then that $E_0$ occurs
in $\bs_N$ but $E_2$ does not.  By statement (i), $E_1$ occurs in 
$\bs_{N-1}$, but arguing as in the previous paragraph, we find this cannot
happen, because it would imply that $\bs$ was commutation equivalent to
a monomial of the form $\bv' E_1 E_0 E_1 \bv''$, which is incompatible
with $\bs$ being reduced.  This completes the inductive step.
\end{proof}

The following is a key structural property of reduced monomials.

\begin{prop}\label{secc3}
Suppose that $\bs \in M_n$ corresponds to a reduced monomial, 
and let $\bs_1 \bs_2 \cdots \bs_p$ be the Cartier--Foata
normal form of $\bs$, where $\bs_p$ is nonempty.
Suppose also that $\bs$ is neither left reducible nor right reducible.
Then either {\rm{(i)}} $p = 1$, meaning that $\bs$ is a product of commuting
generators or {\rm{(ii)}} $p = 2$ and either $\bs = IJ$ or $\bs = JI$.
\end{prop}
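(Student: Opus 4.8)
The plan is to work entirely with the left Cartier--Foata normal form $\bs_1\bs_2\cdots\bs_p$ of Proposition~\ref{seca6} and to prove the sharp statement that, under both irreducibility hypotheses, \emph{every} pair of adjacent blocks is the complementary ``parity split'' of the generators: one block is the support of $I$ and the other the support of $J$. Once this is known, the endgame is immediate. The case $p=1$ is conclusion (i), with nothing to prove. For $p\ge 2$, the adjacent-pair statement forces the blocks to alternate, so $\bs$ is (up to the commuting reorderings within each block) the word $IJIJ\cdots$ or $JIJI\cdots$; if $p\ge 3$ its first three blocks form $IJI$ or $JIJ$, and the relations $IJI=\kk I$ and $JIJ=\kk J$ (with $\kk$ invertible) express $\bs$ as a scalar multiple of a strictly shorter commutation-equivalent monomial, contradicting that $\bs$ is reduced in the sense of Definition~\ref{seca1}. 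Hence $p\le 2$, and when $p=2$ the same adjacent-pair statement applied to $(\bs_1,\bs_2)$ gives $\bs=IJ$ or $\bs=JI$, which is conclusion (ii).

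The engine on the left is Lemma~\ref{secc2}: since $\bs$ is not left reducible, each interior $E_j$ occurring in $\bs_{i+1}$ forces \emph{both} neighbours $E_{j-1},E_{j+1}$ to occur in $\bs_i$, while $E_0$ (resp.\ $E_n$) in $\bs_{i+1}$ forces $E_1$ (resp.\ $E_{n-1}$) in $\bs_i$. Chaining the three clauses across two steps yields the ``backward nesting'' $\{j:E_j\in\bs_{i+2}\}\subseteq\{j:E_j\in\bs_i\}$. I would then record the mirror statement coming from the hypothesis that $\bs$ is \emph{not right reducible}: a ``forward propagation'' asserting that, for $1\le i<p$, each interior $E_j$ in $\bs_i$ forces $E_{j-1},E_{j+1}$ in $\bs_{i+1}$ (and the corresponding endpoint rules). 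This is the exact left--right dual of Lemma~\ref{secc2}, obtained by the symmetric argument phrased with right descent sets $\rdescent{\,\cdot\,}$ in place of $\ldescent{\,\cdot\,}$ (equivalently, by applying Lemma~\ref{secc2} to the reversed monomial, noting that the defining relations of $P_n$, and the elements $I,J$, are invariant under reversal).

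Granting both propagations between a fixed adjacent pair $\bs_i,\bs_{i+1}$ (valid whenever $1\le i<p$), the adjacent-pair statement follows by a cascade that alternately applies the backward and forward rules. Starting from any occurring $E_j$ and using backward propagation to populate $\bs_i$, then forward propagation to populate $\bs_{i+1}$ two steps along, one fills the indices $\ldots,j-2,j,j+2,\ldots$ in one block and $\ldots,j-1,j+1,\ldots$ in the other, cascading outwards until the endpoint rules for $E_0$ and $E_n$ close off both ends. This forces the two supports to be exactly the odd and the even indices in $\{0,\dots,n\}$, i.e.\ the supports of $I$ and $J$; since each block is a product of mutually commuting generators, each $\bs_i$ equals $I$ or $J$ in $M_n$, as required.

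The main obstacle is the third ingredient: the left and right Cartier--Foata layerings of a commutation class need not be reverses of one another in general (a maximal generator can sit in the first block yet contribute to the right descent set), so the forward propagation coming from ``not right reducible'' is \emph{not} automatically available in the \emph{left} normal form. The crux is therefore to show that the two irreducibility hypotheses \emph{together} rigidify the heap — concretely, that all maximal chains have the common length $p$, so that the two layerings align and both propagations hold in a single indexed family of blocks. I expect this reconciliation (together with the routine bookkeeping of the endpoint rules and of the small cases $n=1,2$, where ``interior'' is degenerate and $I=E_1$, $J=E_0$) to be where the real work lies; the subsequent alternation argument and the appeal to $IJI=\kk I$ are then formal.
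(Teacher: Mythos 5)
Your outline --- backward propagation from ``not left reducible'' via Lemma~\ref{secc2}, a dual forward propagation from ``not right reducible'', a cascade forcing adjacent blocks to be the two parity classes, then the endgame with $IJI=\kk I$ --- is plausible, but it has a genuine gap which you flag yourself and do not close: the forward propagation is never established. Applying Lemma~\ref{secc2} to the reversed monomial constrains the blocks of the \emph{right} Cartier--Foata normal form of $\bs$, and these need not be the reverses of the blocks $\bs_1,\ldots,\bs_p$ of the left normal form; your proposed remedy (showing that the two irreducibility hypotheses force all maximal chains of the heap to have the common length $p$, so that the two layerings coincide) is precisely the missing content, is not carried out, and is essentially as hard as the proposition itself. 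As it stands the cascade cannot get started, so the proof is incomplete.

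The paper closes this gap differently, and much more cheaply, by examining only the last block $\bs_p$. Since $\bs_p$ is a nonempty product of distinct commuting generators, either its support is a full parity class --- so $\bs_p=I$ or $\bs_p=J$, and then Lemma~\ref{secc2} alone (backward propagation) forces the blocks to alternate $I,J$ going upwards, whence $p\le 2$ by reducedness --- or else $\bs_p$ contains some $E_i$ without $E_{i-2}$ (or without $E_{i+2}$). In the latter case a single application of Lemma~\ref{secc2} places $E_{i-1}$ in $\bs_{p-1}$, and because $E_{i-2}$ is absent from $\bs_p$, that $E_{i-1}$ commutes past every letter of $\bs_p$ other than $E_i$, exhibiting $\bs$ as commutation equivalent to a word ending in $E_{i-1}E_i$, i.e.\ right reducible: a contradiction. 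No dual propagation lemma and no reconciliation of the two normal forms is needed. I would recommend replacing your two-sided cascade with this one-sided ``gap in $\bs_p$'' argument.
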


\begin{proof}
If $p = 1$, then case (i) must hold, so we will assume
that $p > 1$.  

A consequence of Lemma \ref{secc2} is that 
if $\bs_{i+1} = I$ then $\bs_i = J$, and 
if $\bs_{i+1} = J$ then $\bs_i = I$.
It follows that if $\bs_p \in \{I, J\}$ (in $P_n$), then 
$\bs$ must be an alternating
product of $I$ and $J$.  Since $\bs$ is reduced, this forces $p = 2$ and
either $\bs = IJ$ or $\bs = JI$.  We may therefore assume that $\bs_p \not\in
\{I, J\}$.

Since $\bs_p \not \in \{I, J\}$ and $\bs_p$ is a product of commuting 
generators, at least one of the following two situations must occur.
\begin{enumerate}
\item[(a)]{For some $2 \leq i \leq n$, $\bs_p$ contains an occurrence of $E_i$
but not an occurrence of $E_{i-2}$.}
\item[(b)]{For some $0 \leq i \leq n-2$, $\bs_p$ contains an occurrence of 
$E_i$ but not an occurrence of $E_{i+2}$.}
\end{enumerate}

Suppose we are in case (a).  In this case, Lemma \ref{secc2} 
means that there must
be an occurrence of $E_{i-1}$ in $\bs_{p-1}$; 
Now $E_{i-1}$ fails to commute with two other generators ($E_i$
and $E_{i-2}$).  However,
one of these generators, $E_{i-2}$ does not occur in $\bs_p$.  It follows 
that $\bs$ is right reducible (via $E_i$), which is a contradiction.
Case (b) leads to a similar contradiction, again involving right
reducibility, which completes the proof.
\end{proof}

\begin{lem}\label{secb1}
Let $\bu = \bu_1 s \bu_2 s \bu_3$ be a reduced word in which the occurrences
of the generator $s$ are consecutive, and suppose that every generator
in $\bu_2$ not commuting with $s$ is of the same type, $t$ say.  Then 
$\bu_2$ contains only one occurrence of $t$, and $s \in \{E_0, E_n\}$.
\end{lem}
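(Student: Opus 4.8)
The plan is to prove the statement by strong induction on the length of the reduced word $\bu$, after first passing to a short ``core'' factor on which the relevant relations act transparently. Since $t$ occurs in $\bu_2$ and does not commute with $s$, the two generators are adjacent; write $s=E_i$, so that $t\in\{E_{i-1},E_{i+1}\}$. First I would split $\bu_2$ at its occurrences of $t$, writing $\bu_2=w_0\,t\,w_1\,t\cdots t\,w_k$, where $k\ge 0$ and each block $w_j$ contains neither $s$ (as $\bu_2$ does not) nor $t$ (by construction). By hypothesis every letter of $\bu_2$ failing to commute with $s$ is a $t$, so each $w_j$ commutes letter-by-letter with $s$. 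Hence I may slide $w_0$ to the left and $w_k$ to the right past the two outer copies of $s$, obtaining a word commutation-equivalent to $\bu$ that contains, as a contiguous factor, the core $C=s\,t\,w_1\,t\cdots t\,w_{k-1}\,t\,s$ (with $k$ copies of $t$). Since reducedness is a property of the commutation class and passes to contiguous factors, $C$ is itself reduced.

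Next I would dispose of the degenerate and large-$k$ cases. If $k=0$ then $\bu_2$ commutes with $s$ and $s\bu_2 s$ collapses to a scalar multiple of $\bu_2 s$ via $s^2$, contradicting reducedness; so $k\ge 1$. Suppose $k\ge 2$ and look at the factor $t\,w_1\,t$. If $w_1$ commuted with $t$ it could be slid out and $t^2$ would shorten the word, so $w_1$ contains a letter not commuting with $t$. The only neighbours of $t$ are $s$ and the ``far'' neighbour $r=E_{i\pm 2}$ lying on the opposite side of $t$ from $s$, and $w_1$ contains no $s$; hence every $t$-noncommuting letter of $w_1$ equals $r$, and in particular $r$ exists as a generator, forcing $t\notin\{E_0,E_n\}$. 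I then observe that $t\,w_1\,t$ is a reduced word of exactly the form treated by the lemma, with $(s,t)$ replaced by $(t,r)$: its two $t$'s are consecutive and every letter of $w_1$ failing to commute with $t$ is the single type $r$. Since $t\,w_1\,t$ is strictly shorter than $\bu$, the induction hypothesis applies and yields $t\in\{E_0,E_n\}$, contradicting the previous line. Therefore $k=1$.

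Finally, with $k=1$ the core is simply $C=s\,t\,s$, and it remains to pin down $s$. If $s$ were an interior generator, i.e. $s=E_i$ with $1\le i\le n-1$, then for its neighbour $t$ one of the braid-type relations of Definition~\ref{2BTL pres} applies --- namely $E_1E_0E_1=\kl E_1$, or $E_{n-1}E_nE_{n-1}=\kr E_{n-1}$, or one of the interior relations $E_iE_{i\pm1}E_i=E_i$ --- each of which rewrites $sts$ as a scalar multiple of the strictly shorter word $s$, contradicting reducedness of $C$. (The only triples $E_aE_bE_a$ that no relation shortens are $E_0E_1E_0$ and $E_nE_{n-1}E_n$.) Hence $s\in\{E_0,E_n\}$, and since $k=1$ the block $\bu_2$ contains exactly one $t$, which is the full claim. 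I expect the main obstacle to be the bookkeeping in the inductive step: one must verify that $t\,w_1\,t$ genuinely inherits the hypotheses of the lemma (consecutiveness of its inner $t$'s and the single-type condition on its non-commuting letters), that passing to this shorter factor is legitimate because contiguous factors of reduced words are reduced, and that the far neighbour $r$ is well defined precisely when $t$ is interior, which is exactly what drives the contradiction.
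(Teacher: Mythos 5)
Your proof is correct and follows essentially the same route as the paper's: both arguments are inductive, dispose of the single-occurrence case by extracting a contiguous factor $sts$ and invoking the defining relations (so that only $E_0E_1E_0$ and $E_nE_{n-1}E_n$ survive), and handle multiple occurrences of $t$ by applying the inductive hypothesis to the segment between two consecutive $t$'s, reaching the same contradiction that $t$ would have to lie in $\{E_0,E_n\}$ while failing to commute with two distinct generators. The only cosmetic difference is that you induct on the length of $\bu$ via an extracted core factor, whereas the paper inducts directly on the length of $\bu_2$.
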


\begin{proof}
The proof is by induction on the length, $l$, of the word $\bu_2$.  Note
that $\bu_2$ must contain at least one generator not commuting with $s$,
or after commutations, we could produce a subword of the form $ss$.
This means that the case $l = 0$ cannot occur.  

If $\bu_2$ contains only one generator not commuting with $s$, then after
commutations, $\bu$ contains a subword of the
form $sts$.  This is only possible if $s \in \{E_0, E_n\}$, and this establishes
the case $l = 1$ as a special case.

Suppose now that $l > 1$.  By the above paragraph, we may reduce to the case
where $\bu_2 = \bu_4 t \bu_5 t \bu_6$, and the indicated
occurrences of $t$ are consecutive. Since the occurences of $s$ were
consecutive, $\bu_5$ does not contain $s$. Thus every generator in
$\bu_5$ that does not commute with $t$ will be the same generator $u
\ne s$. Now since $\bu_5$ is shorter than
$\bu_2$, we can apply the inductive hypothesis to show that $t \in
\{E_0, E_n\}$
and $\bu_5$ contains only one occurrence of the generator, $u$.
But this means that $t$ fails to commute with two different
generators, $u$ and $s$ 
contradicting the fact that $t \in \{E_0, E_n\}$ and completing
the proof.
\end{proof}

\begin{lem}\label{secb2}
Let $\bu$ be a reduced monomial.
\begin{enumerate}
\item[(i)]{Between any two consecutive occurrences of $E_0$ in $\bu$,
there is precisely one letter not commuting with $E_0$ (\idest an occurrence
of $E_1$).}
\item[(ii)]{Between any two consecutive occurrences of $E_n$ in $\bu$,
there is precisely one letter not commuting with $E_n$ (\idest an occurrence
of $E_{n-1}$).}
\item[(iii)]{Let $0 < i < n$. Between any two consecutive occurrences
  of $E_i$ in $\bu$,
there are precisely two letters not commuting with $E_i$, and they correspond
to distinct generators.}
\end{enumerate}
\end{lem}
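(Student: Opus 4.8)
The plan is to deduce parts (i) and (ii) as immediate special cases of Lemma~\ref{secb1}, and then to bootstrap these into part (iii). For part (i), I would write $\bu = \bu_1 E_0 \bu_2 E_0 \bu_3$ for two consecutive occurrences of $E_0$, so that $\bu_2$ contains no $E_0$. The only generator not commuting with $E_0$ is $E_1$, so \emph{every} generator in $\bu_2$ that fails to commute with $E_0$ is automatically of the same type $t = E_1$. Lemma~\ref{secb1} then applies directly (with $s = E_0$) and shows that $\bu_2$ contains exactly one occurrence of $E_1$; note that the conclusion $s \in \{E_0, E_n\}$ of that lemma is consistent here. Part (ii) is identical after replacing $E_0$ by $E_n$ and $E_1$ by $E_{n-1}$.

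For part (iii), fix $0 < i < n$ and write $\bu = \bu_1 E_i \bu_2 E_i \bu_3$ with $\bu_2$ free of $E_i$. The generators failing to commute with $E_i$ are exactly $E_{i-1}$ and $E_{i+1}$, so I must show each occurs exactly once in $\bu_2$. First I would show that both occur \emph{at least} once. Since $\bu$ is reduced, $\bu_2$ cannot be free of generators not commuting with $E_i$ (otherwise the two copies of $E_i$ could be brought together and the relation $E_i^2 = \delta E_i$ would shorten $\bu$). If all the non-commuting generators in $\bu_2$ were of a single type, then Lemma~\ref{secb1} would force $E_i \in \{E_0, E_n\}$, contradicting $0 < i < n$; hence both $E_{i-1}$ and $E_{i+1}$ must appear.

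It remains to prove that neither appears twice; I would argue for $E_{i-1}$, the case of $E_{i+1}$ being symmetric. Suppose $\bu_2$ contained two occurrences of $E_{i-1}$, and choose two that are consecutive, so that $\bu$ contains $E_{i-1}\bw E_{i-1}$ with $\bw \subseteq \bu_2$ free of $E_{i-1}$. The generators not commuting with $E_{i-1}$ are $E_{i-2}$ and $E_i$, and $\bw$ contains no $E_i$ (being inside $\bu_2$); so every non-commuting generator in $\bw$ is of the single type $E_{i-2}$, and the occurrences of $E_{i-1}$ are consecutive in $\bu$ since all intervening letters lie in $\bu_2$.

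The hard part, and the reason the bootstrap is needed, is the boundary. If $i \ne 1$ then $E_{i-1} \notin \{E_0, E_n\}$, so Lemma~\ref{secb1} (with $s = E_{i-1}$) yields an immediate contradiction. But when $i = 1$ we have $E_{i-1} = E_0$ and Lemma~\ref{secb1} gives no contradiction, so a separate argument is required: here the two consecutive copies of $E_0 = E_{i-1}$ lie inside $\bu_2$, which contains no $E_1$ (as $\bu_2$ sits between consecutive copies of $E_1 = E_i$), yet part (i), already proved, insists that exactly one $E_1$ separates consecutive occurrences of $E_0$ --- a contradiction. The boundary case $i = n-1$ for the generator $E_{i+1} = E_n$ is dispatched identically using part (ii). This shows that $E_{i-1}$, and symmetrically $E_{i+1}$, occurs exactly once, so there are precisely two non-commuting letters, of distinct types, between consecutive occurrences of $E_i$, completing the proof.
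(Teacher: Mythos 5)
Your proof is correct and follows essentially the same route as the paper's: parts (i) and (ii) are direct applications of Lemma~\ref{secb1}, and part (iii) rules out a repeated $E_{i\pm 1}$ by applying Lemma~\ref{secb1} again to two consecutive occurrences of that generator. Your explicit treatment of the boundary cases $i=1$ and $i=n-1$ (via parts (i) and (ii)) is a sensible extra precaution, but it is the same argument in substance --- the paper reaches the same contradiction there because Lemma~\ref{secb1} already forces the existence of a non-commuting letter between consecutive occurrences of $E_0$ (resp.\ $E_n$).
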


\begin{proof}
To prove (i), we apply Lemma \ref{secb1} with $s = E_0$; the hypotheses
are satisfied as we necessarily have $t = E_1$.  The proof of (ii) is similar.

To prove (iii), write $\bu = \bu_1 s \bu_2 s \bu_3$ for consecutive
occurrences of the generator $s = E_i$.  Since $s \not\in \{E_0, E_n\}$, the 
hypotheses of Lemma \ref{secb1} cannot be satisfied, so $\bu_2$ must have at
least one occurrence of each of $t_1 = E_{i-1}$ and $t_2 = E_{i+1}$.
Suppose that $\bu_2$ contains two or more occurrences of $t_1$.  The fact
that the occurrences of $s$ are consecutive means that two consecutive
occurrences of $t_1$ cannot have an occurrence of $s$ between them.
Applying Lemma \ref{secb1}, this means that there is precisely one generator
$u$ between the consecutive occurrences of $t_1$ such that $t_1 u \ne u t_1$,
and furthermore, that $t_1 \in \{E_0, E_n\}$.  This is a contradiction, because
$t_1$ fails to commute with two different generators ($s$ and $u$).

One can show similarly that $\bu_2$ cannot contain two or more 
occurrences of $t_2$.  We conclude that each of $t_1$ and $t_2$ occurs
precisely once, as required.
\end{proof}

\section{The map $\phi$}

Recall the map  $\phi: P_n \to b_n$ from Theorem \ref{thmmain}.
Here we will consider the possible diagrams arising from reduced
monomials in $P_n$.
We let $D_{\bs}$ be the `concrete' pseudo-diagram \cite{gensymp} associated to a 
 monomial
$\bs=E_{i_1} E_{i_2} \cdots E_{i_m}$
formed by concatenating 
$e_{i_1}$,  $e_{i_2}$,  $\ldots$,  $e_{i_m}$ in order but without
applying any straightening, 
and without applying any further isotopies that deform across the
bounding frames of the concatenating components. Thus we  include the
possiblity that $D_{\bs}$ has loops.
So $D_{\bs} = \phi(\bs) $ as (a scalar multiple of) a diagram, after applying any
straightening rules, but the shape of
the concrete pseudo-diagram $D_{\bs}$ allows us to reconstruct $\bs$.
For example, the monomial, $\bs = E_1E_2E_4E_0E_1$ has
(concrete pseudo-)diagram as illustrated in Figure~\ref{fig0}.
\begin{figure}[ht]
$$
D_\bs = e_1e_2e_4e_0e_1 =
\raisebox{-2.5cm}{\epsfbox{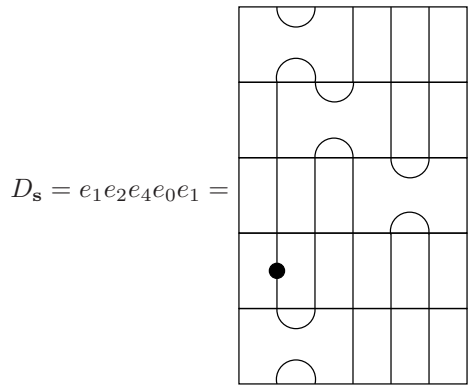}}
$$
\caption{Concrete pseudo-diagram associated to the monomial 
$\bs = E_1E_2E_4E_0E_1$}\label{fig0}
\end{figure}

The non-loop arcs in the concrete pseudo-diagram $D_{\bs}$ are made up 
of vertical line
segments, cups and caps.  In the following development, we will regard such arcs
as having a direction or orientation (as we shall see shortly, this
arc orientation can be chosen arbitrarily). 
Thus each vertical line segment becomes
oriented northwards ($N$) or southwards ($S$), and each cup or cap is oriented
westwards ($W$) or eastwards ($E$).  
If an oriented arc contains an occurrence of $E$ after
an occurrence of $W$, then we say that the arc has a {\it west-east direction
reversal}; we define {\it east-west direction reversal} analogously.
If an arc has at least one direction reversal, then we say that the arc
{\it changes direction}.

Consider, for example, the decorated arc in Figure~\ref{fig0}.  If we orient
the topmost vertical line segment in this arc by $S$, then, starting with this
line segment and working in the direction of the orientation, the segments of 
this arc are consecutively labelled $$
S, W, W, S, S, S, E, N, N, E, S, S, S
,$$ where the sixth letter from the left (an occurrence of $S$) corresponds 
to the decorated segment.
On the other hand, if we orient the topmost vertical line segment by $N$, then
the segments of the arc are consecutively labelled $$
N, N, N, W, S, S, W, N, N, N, E, E, N
$$ in the direction of the orientation, and the eighth letter from the left
(an occurrence of $N$) corresponds to the decorated segment.
In both cases, we have a west-east direction reversal, but no east-west
direction reversal, and the arc changes direction.  

The above example illustrates the basic fact that the property of having a 
west-east direction reversal does not depend on the orientation chosen 
for an arc.  For similar reasons, the same can be said about east-west 
direction reversals, and about the property of changing direction.

It will turn out to be significant (see Lemma~\ref{secb3} below) that 
between any occurrence of $W$ and an occurrence of $E$ in the particular
arc of Figure~\ref{fig0} studied above, there
is a vertical segment corresponding to a decoration in the diagram.  
Figure~\ref{fignoneg} contains a west-east reversal in which this does
not happen, but the concrete pseudo-diagram in Figure~\ref{fignoneg} corresponds 
to the non-reduced monomial $E_1 E_2 E_3 E_1$.

\begin{figure}[ht]
\epsfbox{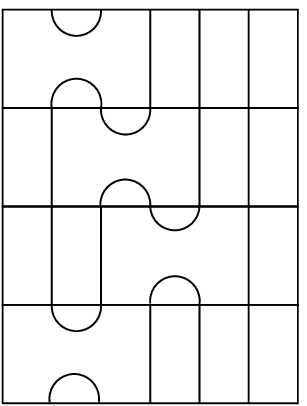}
\caption{West-east direction reversal of an undecorated
  line.}\label{fignoneg}
\end{figure}

\begin{lem}\label{secb3}
Let $D$ be a diagram of the form $\phi(\bs)$ for some reduced monomial $\bs$.
\begin{enumerate}
\item[(i)]
If an arc of $D$ contains a west-east direction reversal, then that arc must
contain a consecutive sequence $X_1, \ldots, X_k$ of cups, caps and vertical
segments with $1 < a < b < k$ and $a < b-1$, such that
\begin{enumerate}
\item[(a)] $X_1$ and $X_a$ are labelled $W$;
\item[(b)] $X_b$ and $X_k$ are labelled $E$;
\item[(c)] for some $P$, $Q$ with $\{P, Q\} = \{N, S\}$, 
\begin{enumerate}
\item[(1)] the $X_i$ for $a < i < b$ are all labelled $P$, and exactly one
of them carries a left blob;
\item[(2)] the $X_j$ for $1 < j < a$ and for $b < j < k$ are all labelled $Q$;
\end{enumerate}
\item[(d)] $X_1$ and $X_k$ form a cup-cap pair corresponding to a single
occurrence of $E_1$.
\end{enumerate}
\item[(ii)]
If an arc of $D$ contains a west-east direction reversal, then that arc must
contain a consecutive sequence $X_1, \ldots, X_k$ of cups, caps and vertical
segments with $1 < a < b < k$ and $a < b-1$, such that
\begin{enumerate}
\item[(a)] $X_1$ and $X_a$ are labelled $E$;
\item[(b)] $X_b$ and $X_k$ are labelled $W$;
\item[(c)] for some $P$, $Q$ with $\{P, Q\} = \{N, S\}$, 
\begin{enumerate}
\item[(1)] the $X_i$ for $a < i < b$ are all labelled $P$, and exactly one
of them carries a right blob;
\item[(2)] the $X_j$ for $1 < j < a$ and for $b < j < k$ are all labelled $Q$;
\end{enumerate}
\item[(d)] $X_1$ and $X_k$ form a cup-cap pair corresponding to a single
occurrence of $E_{n-1}$.
\end{enumerate}
\end{enumerate}
\end{lem}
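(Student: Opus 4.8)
The plan is to read the reversal off the concrete pseudo-diagram $D_{\bs}$ and then let the combinatorial control of reduced monomials, especially Lemma~\ref{secb2}, dictate the local picture. First I would orient the arc carrying the west--east reversal and record its label sequence in $\{N,S,W,E\}$; since the vertical orientation of an arc can change only at a cup or a cap, the vertical segments between two consecutive cups/caps all share a common label, so I may speak of the arc \emph{dipping} westwards and \emph{returning} eastwards. The westernmost cups and caps of such a dip are precisely those reaching the leftmost column, and in $D_{\bs}$ a cup--cap at the two leftmost columns is the signature of the generator $E_1$. Choosing the innermost such dip, I would identify the two $E_1$-type cups/caps that bound the excursion to column~$1$, together with the segment of column~$1$ lying between them.

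The crux, which I would isolate next, is that this excursion must enclose exactly one left blob. This is exactly the phenomenon distinguishing Figure~\ref{fig0} from Figure~\ref{fignoneg}: an undecorated west--east turnaround is the shadow of a subword of the form $E_i E_{i\pm 1} E_i$, a left-hand side of a defining relation, so it cannot occur in a reduced monomial. The clean way to harvest this is Lemma~\ref{secb2}(iii) applied with $i=1$: between the two (consecutive) occurrences of $E_1$ bounding the dip there are exactly two letters not commuting with $E_1$, necessarily one $E_0$ and one $E_2$. The $E_0$ deposits a single left blob on the column-$1$ segment between the cap and cup of the two $E_1$'s, giving (c)(1), while the $E_2$ supplies the pair of cups/caps one column to the east that bracket the whole pocket; Lemma~\ref{seca4}(i) is what makes ``being $L$-decorated'', and hence the blob count, well defined and invariant under the reductions.

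With the generators located, reading off the labels is routine. Travelling outward along the arc from the blobbed column-$1$ segment one meets, in order, the inner westward and eastward cups/caps at the leftmost columns and then the outer ones one step east; assigning $P$ to the common direction of the blobbed run and $Q$ to the direction of the two flanking runs yields (a), (b), (c)(1) and (c)(2), and the strict inequalities $1<a<b<k$, $a<b-1$ merely record that each flank and the blobbed middle are nonempty. Part~(d) then follows from the identification of the relevant cups and caps with occurrences of the generator~$E_1$. Throughout I would argue on $D_{\bs}$ rather than on the straightened diagram, and --- as the proof of Lemma~\ref{seca4} already warns --- I would keep track of the fact that the topological relation~\eqref{topquot} can alter undecorated connectivity but not the decorated features being tracked, so the word ``undecorated'' is never dropped.

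Finally, part~(ii) is the left--right mirror of part~(i): the substitution $E_0\leftrightarrow E_n$, $E_1\leftrightarrow E_{n-1}$, left blob $\leftrightarrow$ right blob and $W\leftrightarrow E$, together with Lemma~\ref{secb2}(ii) in place of (i), transports the argument to the right-hand boundary verbatim. The hard part will be the middle step: converting ``$\bs$ is reduced'' into the diagrammatic assertion that the dip descends all the way to column~$1$ and encloses precisely one left blob --- ruling out both undecorated turnarounds and turnarounds carrying two or more blobs. Everything on either side of this is orientation bookkeeping or a direct appeal to Lemma~\ref{secb2}, but it is here that the reduced hypothesis must be used with care.
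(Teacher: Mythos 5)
Your proposal is correct and follows essentially the same route as the paper's proof: both isolate the innermost turnaround of the reversal as a pair of cups/caps joined only by vertical segments, observe that these must come from consecutive occurrences of a single generator, and then use Lemma~\ref{secb2} together with reducedness of $\bs$ to rule out every generator except $E_1$ (resp.\ $E_{n-1}$), forcing exactly one intervening $E_0$ (resp.\ $E_n$) and hence exactly one blob, after which the labels and the outer bracketing pair are read off the local picture and part (ii) follows by the left--right mirror symmetry. The paper merely makes the case analysis explicit by drawing the five local diagrams attached to Lemma~\ref{secb2}, which is exactly the step you flag as ``the hard part.''
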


\begin{proof}
Recall that Lemma~\ref{secb2} constrains what can happen
between two occurrences of the same generator $E_i$ in the reduced monomial 
$\bs$.  Up to commutation of 
generators, these cases are shown in the next five diagrams, which illustrate
the cases $i = 0$, $i = n$, $2 \leq i \leq n-2$, $i = 1$ and $i = n-1$
respectively.
$$
ee_1e =
\raisebox{-1.5cm}{\epsfbox{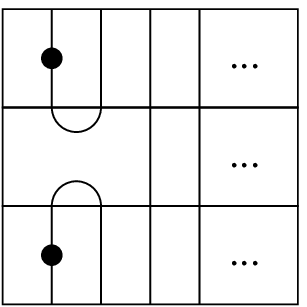}},
\qquad
fe_{n-1}f =
\raisebox{-1.5cm}{\epsfbox{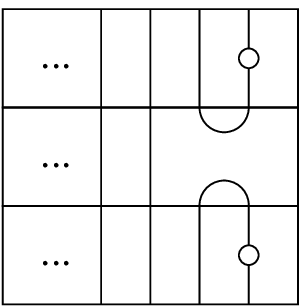}}
$$
$$
e_ie_{i-1}e_{i+1}e_i =
\raisebox{-2.0cm}{\epsfbox{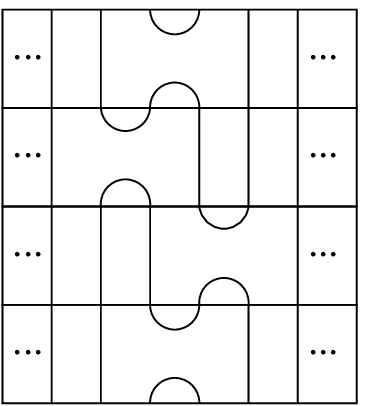}}
, \quad 2 \le i \le n-2,
$$
$$
e_1ee_{2}e_1 =
\raisebox{-2.0cm}{\epsfbox{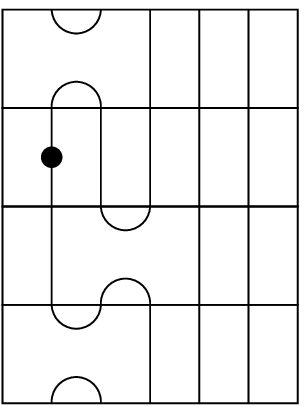}},
\qquad
e_{n-1}fe_{n-2}e_{n-1} =
\raisebox{-2.0cm}{\epsfbox{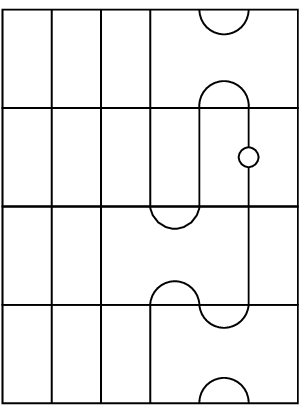}}
$$

If a direction reversal occurs in an arc of $D$, this must correspond to
a consecutive sequence of oriented segments of the form $$
Y_1, Y_2, \ldots, Y_{k-1}, Y_r
$$ in which $\{Y_1, Y_r\} = \{W, E\}$, and all the $Y_i$ for
$1 < i < k$ are either all equal to $S$ or all equal to $N$.  In this case,
$Y_1$ and $Y_r$ correspond to distinct letters of $\bs$; call these 
$\bs_1$ and $\bs_2$.  Because $Y_1$ and $Y_2$ are separated only by vertical
line segments, it follows that $\bs_1$ and $\bs_2$ correspond to
consecutive occurrences in $\bs$ of the same generator, $E_i$.  The 
possibilities enumerated in the diagrammatic version of Lemma~\ref{secb2}
now force either $i = 1$ or $i = n-1$, and the conclusions now follow from the
corresponding two pictures, with $X_a = Y_1$ and $X_b = Y_r$.
\end{proof}

\begin{lem}\label{secb4}
Let $D$ be a diagram representing a reduced monomial $\bu$ (\idest
$D=\phi(\bu)$).
\begin{enumerate}
\item[(i)]{The diagram $D$ is $L$-decorated at $1$ (respectively, $1'$)
if and only if 
the left (respectively, right) descent set of $\bu$ contains $E_0$.}
\item[(ii)]{The diagram $D$ is $R$-decorated at $n$ (respectively, $n'$), 
if and only if 
the left (respectively, right) descent set of $\bu$ contains $E_n$.}
\item[(iii)]{Suppose that $1 \leq i < n$.  Then points $i$ and $i+1$ 
(respectively, $i'$ and $(i+1)'$) in $D$ are connected by an undecorated 
edge if and only if the left
(respectively, right) descent set of $\bu$ contains $E_i$.}
\end{enumerate}
\end{lem}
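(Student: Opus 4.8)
The plan is to prove each statement as a genuine ``if and only if'', and to reduce the primed-vertex (right descent set) halves to the unprimed halves by a symmetry. Reversing a monomial letter-by-letter is an anti-automorphism of $P_n$: every defining relation in Definition~\ref{2BTL pres} is invariant under reversal, using for the last two relations that the factors of $I$, and of $J$, pairwise commute, so that $I$ and $J$ are each fixed up to commutation. On diagrams this reversal corresponds to reflecting $D$ in a horizontal axis, which swaps each point $k$ with $k'$, interchanges cups with caps, and preserves the left/right sides (hence $L$-decorations with $L$-decorations at matching points, and likewise for $R$). Since reversal also interchanges $\mathcal{L}(\bu)$ with $\mathcal{R}(\bu)$, it will suffice to treat the top-row, left-descent halves of (i), (ii), (iii).

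For the forward (``if'') implications I would suppose the relevant generator lies in $\mathcal{L}(\bu)$. By Remark~\ref{seca7} it then occurs in the first Cartier--Foata block, so $\bu$ is commutation equivalent to $s\bu'$ with $s=E_0$, $s=E_n$, or $s=E_i$ respectively. Commutation-equivalent monomials are weakly equivalent, so by Lemma~\ref{seca4} it is enough to verify the stated feature for $\phi(s\bu') = \phi(s)\,\phi(\bu')$. Stacking $\phi(s)$ on top of $\phi(\bu')$, the topmost piece is exactly the nontrivial part of $\phi(s)$: the generator $e$ places a left blob on the edge leaving point $1$ at the very top; $f$ places a right blob on the edge leaving point $n$; and $e_i$ contributes an undecorated cup joining points $i$ and $i+1$. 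As this feature sits above everything in $\phi(\bu')$, it survives in $D$, giving the ``if'' direction in each case.

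For the harder (``only if'') implications I would trace the distinguished arc downward from its top endpoint and apply the contrapositive of Lemma~\ref{secb3}, namely that an \emph{undecorated} arc has no west--east and no east--west direction reversal. In part (i), if $D$ is $L$-decorated at $1$ then the arc $\gamma$ leaving point $1$ carries a left blob as its closest decoration, so the portion of $\gamma$ above that blob is undecorated; having no reversal, its cup/cap segments are all oriented alike, and since it begins and ends at horizontal position $1$ it must be a straight vertical descent. Hence no generator touches position $1$ above the $E_0$ producing the blob (an intervening $E_1$ would cap $\gamma$ off before it reached the blob), so that $E_0$ commutes past every earlier letter and $E_0\in\mathcal{L}(\bu)$. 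Part (ii) is identical with $E_0,e,L$ replaced by $E_n,f,R$. For part (iii), the undecorated arc joining $i$ and $i+1$ likewise has no reversal, so it contains exactly one horizontal segment: a single cup from one occurrence of $E_i$, reached by straight vertical runs from $i$ and $i+1$; thus no $E_{i-1}$, $E_i$ or $E_{i+1}$ lies above that $E_i$, which therefore commutes to the front, giving $E_i\in\mathcal{L}(\bu)$.

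The main obstacle is exactly this ``only if'' step: excluding the possibility that a feature visible at the top of $D$ is actually produced by a generator buried deep inside $\bu$ which cannot be commuted to the front. The whole force of the direction-reversal analysis of Lemma~\ref{secb3} is spent here, as it is what forbids the undecorated arc from meandering around intervening structure. Once the arc is known to descend straight to the generator responsible for the feature, the commutation-monoid bookkeeping that places that generator in the first Cartier--Foata block is routine.
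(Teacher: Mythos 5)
Your proof is correct and follows essentially the same route as the paper's: the ``if'' direction by stacking the descent generator on top of the rest of the diagram, and the ``only if'' direction by using Lemma~\ref{secb3} to rule out direction reversals in the arc leaving the relevant vertex, so that the generator responsible for the observed feature sits at the top of the word and commutes to the front. The only differences are cosmetic: the paper runs the ``only if'' step as a contradiction (a meandering arc would be forced to carry the wrong decoration, violating Lemma~\ref{seca4}) rather than your direct ``the undecorated initial portion is a straight vertical descent'' argument, and it disposes of the primed cases with ``proved similarly'' where you make the reversal anti-automorphism explicit.
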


\begin{proof}
In all three cases, the ``if'' statements follow easily from diagram calculus
considerations, so we only prove the ``only if'' statements.

Suppose for a contradiction that $D$ is $L$-decorated at $1$, but that the
left descent set of $\bu$ does not contain $E_0$. 
For this to happen, the 
line leaving point $1$ must eventually encounter an $L$-decoration, but must
first encounter a cup corresponding to an occurrence of the
generator $E_1$.  The only way this can happen and be consistent with Lemma
\ref{secb3} is for the 
line to then travel to the east wall after encountering
$E_1$, then change direction and then travel back to the west wall, as shown
in Figure \ref{fig2}.  (Note that this can only happen if $n$ is odd, and that
as before, 
the thin dotted lines in the diagram indicate pairs of horizontal edges
that correspond to the same generator. 
\begin{figure}
        \input{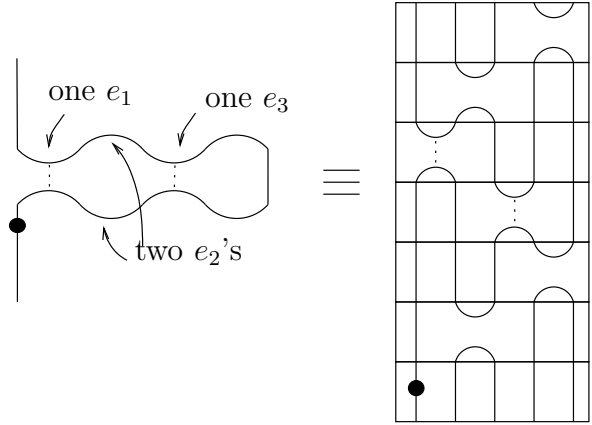}
\caption{Illustrating the proof of Lemma \ref{secb4}~(i).\label{fig2}}
\end{figure}
In the figure we illustrate the element $E_4E_2E_1E_3E_4E_2E_0$ (which is
not actually reduced).)
So we may obtain another reduced expression for $\bu$, namely, 
$\bu = \bv E_1 \bu'' E_0 \bu'''$ where $\bu''$ and
$\bu'''$ are  reduced, and $\bv E_1\bu''$ does not contain $E_0$.

The arc leaving $1$ in the diagram for $\bv E_1 \bu''$ contains an east-west
direction reversal, and thus by Lemma~\ref{secb3} contains an occurrence
of $E_n$, but no occurrence of $E_0$.  This arc is therefore $R$-decorated.
By Lemma~\ref{seca4}, neither it, nor the arc leaving $1$ from $D$, can be
$L$-decorated, which is a contradiction.

The claim regarding $1'$ and the right descent
set is proved similarly.  This completes the proof of (i), and the proof of
(ii) follows by modifying the above proof in the obvious way.

We now turn to (iii).
Suppose for a contradiction that points $i$ and $i+1$ in $D$ are connected
by an undecorated edge, but that the
left descent set of $\bu$ does not contain $E_i$.  

Since $i$ is connected to $i+1$,
the arc leaving $i$ (respectively, $i+1$) must (after possibly traversing
some vertical line 
segments) either encounter a cup corresponding to $E_{i-1}$ or $E_i$
(respectively, $E_i$ or $E_{i+1}$).  Suppose for a contradiction that
the arc leaving $i$ encounters an $E_{i-1}$ first.  By Lemma~\ref{secb3},
the arc leaving $i$ performs a west-east direction reversal, as shown
in Figure~\ref{fig3}.
\begin{figure}
\caption{Illustrating the proof of Lemma \ref{secb4}~(iii).\label{fig3}}
        \epsfbox{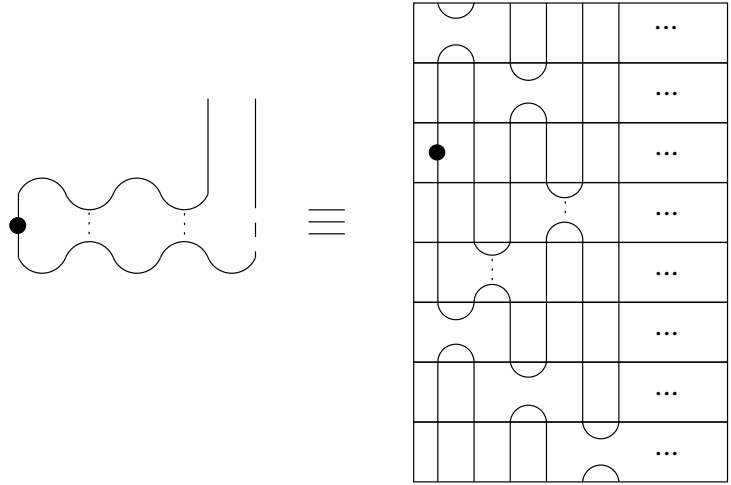}
\end{figure}
This implies that the arc is $L$-decorated at $i$, which contradicts
Lemma~\ref{seca4}.

We have shown that the arc leaving $i$ encounters an occurrence of $E_i$ first,
and a similar argument shows that the arc leaving $i+1$ also encounters an
occurrence of $E_i$ first.  This proves the claim about the left descent
set.
The claim regarding the right descent
set is proved similarly.  This completes the proof of (iii).
\end{proof}

\begin{lem}\label{secb5}
Let $\bu$ and $\bu'$ be reduced monomials that map to the same
diagram $D$ under $\phi$.
\begin{enumerate}
\item[(i)]{If $\bu'$ is a product of commuting generators, then
$\bu$ and $\bu'$ are equal in $P_n$.}
\item[(ii)]{If $\bu' = IJ$ or $\bu' = JI$, then $\bu$ and $\bu'$
are equal in $P_n$.}
\end{enumerate}
\end{lem}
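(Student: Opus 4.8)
The plan is to reduce both parts to the structural classification in Proposition~\ref{secc3}, using Lemma~\ref{secb4} to transport information between the two monomials through their common diagram. First I would observe that since $\phi(\bu)=\phi(\bu')=D$, every descent-set membership can be read off from the single diagram $D$ via Lemma~\ref{secb4}: $E_0\in\ldescent{\bu}$ iff $D$ is $L$-decorated at $1$, $E_n\in\ldescent{\bu}$ iff $D$ is $R$-decorated at $n$, and $E_i\in\ldescent{\bu}$ iff points $i$ and $i+1$ are joined by an undecorated edge, with the analogous statements for right descents at the primed points. Applying these criteria to $\bu$ and to $\bu'$ simultaneously yields $\ldescent{\bu}=\ldescent{\bu'}$ and $\rdescent{\bu}=\rdescent{\bu'}$.

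Granting for the moment that $\bu$ is neither left nor right reducible, Proposition~\ref{secc3} forces $\bu$ to be either a product of distinct commuting generators ($p=1$) or one of $IJ,JI$ ($p=2$), and the endgame is a matching argument against $\bu'$. For part~(i), $\bu'$ is a commuting product, so $\ldescent{\bu'}=\rdescent{\bu'}$, whence $\ldescent{\bu}=\rdescent{\bu}$; since $\ldescent{IJ}$ is the generator set of $I$ while $\rdescent{IJ}$ is the generator set of $J$ (and these differ), and likewise for $JI$, both alternatives $IJ$ and $JI$ are excluded, so $\bu$ is a commuting product. A reduced commuting product is the product of its distinct descent generators, so $\bu=\prod\ldescent{\bu}=\prod\ldescent{\bu'}=\bu'$ in $P_n$. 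For part~(ii), say $\bu'=IJ$: here $\ldescent{\bu}$ and $\rdescent{\bu}$ are the distinct generator sets of $I$ and of $J$, which rules out a commuting product (those have equal descent sets) and rules out $JI$ (which has the two sets interchanged), leaving $\bu=IJ=\bu'$; the case $\bu'=JI$ is symmetric.

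The main obstacle is the deferred claim that $\bu$ is neither left nor right reducible, and this is where the diagrammatic lemmas must do the real work; I would argue by contradiction. A left reduction $\bu\equiv st\bv$ with $s,t$ noncommuting and $t\notin\{E_0,E_n\}$ places $s$ in $\ldescent{\bu}$ and an interior generator $t$ in the second Cartier--Foata layer of $\bu$ (Proposition~\ref{seca6}, Remark~\ref{seca7}). For the resulting longer word still to straighten to the simple diagram $D$, one is forced into a repeated-generator configuration; using Lemma~\ref{secb2} to control such repeats and then Lemma~\ref{secb3}, this produces either a direction-reversing arc carrying a blob or an extra boundary decoration. But $D=\phi(\bu')$ is the explicit diagram of a canonical monomial, all of whose arcs are short undecorated cups and caps together with at most the two boundary single-blob strands; it contains no direction-reversing or decorated interior arc, and by Lemma~\ref{seca4} these features are invariant under weak equivalence. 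This contradiction, together with its right-handed mirror, establishes non-reducibility. I expect the careful bookkeeping here, namely matching the repeated-generator patterns permitted by Lemma~\ref{secb2} against the rigid known shape of $D$, to be the most delicate part of the argument.
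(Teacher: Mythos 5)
Your overall strategy---prove that $\bu$ is neither left nor right reducible, invoke Proposition~\ref{secc3}, and then pin down $\bu$ by matching descent sets against $\bu'$---coincides with the paper's proof of part~(ii), and for part~(i) it is a genuinely different route: the paper instead bounds the number of occurrences of $E_0$ and $E_n$ in $\bu$ and then falls back on the known faithfulness of the diagram calculi for the blob and Temperley--Lieb subalgebras, which your argument would avoid entirely. The descent-set bookkeeping in your second paragraph is fine. The difficulty is that everything rests on the deferred non-reducibility claim, and your justification of it is wrong in exactly the case needed for part~(ii): the diagram $D=\phi(IJ)$ is \emph{not} made of ``short undecorated cups and caps together with at most the two boundary single-blob strands.'' Concatenating $I$ and $J$ produces one long arc that weaves alternately through the caps of $I$ and the cups of $J$ and carries both a left and a right blob, so $\phi(IJ)$ certainly contains a direction-reversing, decorated arc, and the contradiction you are aiming for never materializes. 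Non-reducibility of $\bu$ in part~(ii) is nevertheless true, and the paper extracts it directly from the descent sets: if $\bu\equiv st\bv$ with $s,t$ noncommuting and $t=E_j\notin\{E_0,E_n\}$, then $s\in\ldescent{\bu}$ is a generator of $I$, forcing $j$ to have the opposite parity, whence \emph{both} neighbours $E_{j-1},E_{j+1}$ of $t$ lie in $\ldescent{\bu}$; only one of them can be $s$, so $t$ can never be commuted into the second position. Your proof needs this (or some other) repair.

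For part~(i) your description of $D$ is accurate and the non-reducibility claim is provable roughly as you indicate, but the sketch points at the wrong mechanism and leans on an invariance that Lemma~\ref{seca4} does not provide. It is not that a left reduction ``forces a repeated-generator configuration''; rather, if $\bu\equiv E_iE_{i+1}\bv$ with both generators interior, then in the concrete pseudo-diagram the arc leaving top point $i+2$ first travels west through the cup of $e_{i+1}$ and the cap of $e_i$, yet in a commuting-product diagram it must terminate at top point $i+3$ or at bottom point $(i+2)'$, so it must reverse direction; Lemma~\ref{secb3} then forces a blob onto that arc, and it is the \emph{blob}---not the reversal---that is invariant under weak equivalence via Lemma~\ref{seca4} and absent from every arc of $\phi(\bu')$ at that position. (Direction reversals themselves are features of the concrete pseudo-diagram and are smoothed away by isotopy, so they are not among the invariants listed in Lemma~\ref{seca4}.) The boundary case $s\in\{E_0,E_n\}$ must be treated separately: there the reduction decorates the cup joining $1$ and $2$ (or $n-1$ and $n$), which no commuting product can produce. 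With these repairs your argument closes and, for part~(i), is arguably cleaner than the paper's; as written, however, the central step is not established.
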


\begin{proof}
Note that as $\bu'$ is the product of commuting generators we have
$$\ldescent{\bu'} = \rdescent{\bu'}=\{E_i \mid E_i \mbox{ occurs in a
  minimal length expression for } \bu'\}$$

We first prove (i).  By Lemma \ref{secb4} and the fact that $\bu$ and $\bu'$
represent the same diagram, we must have 
$$
\ldescent{\bu} = \ldescent{\bu'} = \rdescent{\bu'} = \rdescent{\bu}.
$$

Suppose that $\bu'$ contains an occurrence of the generator $E_0$.
This implies that $\bu$ must contain an occurrence of $E_0$, because  
$E_0 \in \ldescent{\bu'} = \ldescent{\bu}$.
Suppose also (for a contradiction) that $\bu$ contains two occurrences of
the generator $E_0$.  By Lemma \ref{secb2}, there must be an occurrence of
$E_1$ between the first (\idest leftmost or northernmost) two occurrences
of $E_0$.  

Since points $1$ and $1'$ of $D$ are connected by an $L$-decorated
line,
(using Lemma \ref{secb4})
there must be an occurrence of $E_2$ immediately above the aforementioned 
occurrence of $E_1$ in order to prevent the line emerging from $1$ 
from exiting the diagram at $2$, as illustrated below:
$$
\epsfbox{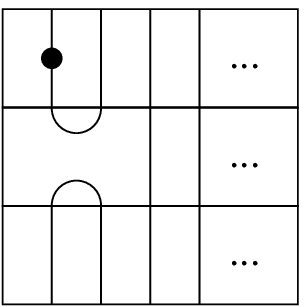}
$$  (``Immediately above'' means that there
are no other occurrences of $E_1$ or $E_2$ between the two occurrences
mentioned.)  In turn, we must have an occurrence of $E_3$ immediately below
the aforementioned occurrence of $E_2$ in order to prevent the line from
exiting the box at point $3'$.  
This is only sustainable if the arc between $1$ and $1'$ has an east-west
direction reversal.  Lemma~\ref{secb4} then forces the arc to contain
a right blob, which in turn implies that $n$ is odd, as shown below.
$$ 
       \epsfbox{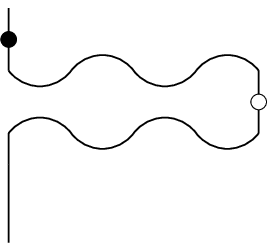}
$$

There are two ways this picture can continue to the bottom.
Either the line exits the box at point 
$1'$ without encountering further generators, or the line encounters an 
occurrence of $E_1$.  The first situation cannot occur because it contradicts
Lemma \ref{secb4} and the hypothesis that $E_0 \in \rdescent{\bu}$.  The second
situation cannot occur because it shows (by repeating the 
argument in the paragraph above)
that $\bu$ is commutation equivalent
to a monomial of the form $\bv JIJ \bv'$, which contradicts the hypothesis
that $\bu$ be reduced.

We conclude that $\bu$ contains precisely
one occurrence of $E_0$, and furthermore that 
$\bu$ contains no occurrences of $E_1$.  

A similar argument shows that if $\bu'$ contains an occurrence of the 
generator $E_n$, then $\bu$ contains at most one occurrence of $E_n$, and it 
can only contain $E_n$ if it contains no occurrences of $E_{n-1}$.

It follows that at least one of the three situations must occur:
\begin{enumerate}
\item[(a)]{$\bu'$ contains $E_0$ and $\bu = E_0 D_E$, 
where $D_E$ contains no occurrences of $E_0$ or $E_1$;}
\item[(b)]{$\bu'$ contains $E_n$ and $\bu = D_F E_n$, 
where $D_F$ contains no occurrences of 
$E_{n-1}$ or $E_n$;}
\item[(c)]{$\bu'$ contains neither $E_0$ nor $E_n$.}
\end{enumerate}

In cases (a) and (b), there is a corresponding factorization of $\bu'$, 
and the result claimed now follows from the faithfulness of the 
diagram calculus for the blob algebra \cite{blobcgm,marblobpres}.
For example, in case (a), we have
$\bu' = E_0 D'_E$ and $\bu = E_0 D_E$. We view $D'_E$ and $D_E$ as
elements of the blob algebra, where the blob in this 
case is identified with $E_n$.
Then as $D'_E$ and $D_E$ have the same diagram, they must be also
equal in $P_n$ by the faithfulness of the blob algebra.
Thus $\bu'$ and $\bu$ are equal in $P_n$.

Suppose that we are in case (c), but that $\bu$ contains an occurrence
of $E_0$ or $E_n$.  Because the diagram $D$ corresponds to $\bu'$, it cannot 
have decorations, so it must be the case that $\phi(\bu)$ is either $L$-decorated at some
point, or $R$-decorated at some point.  This contradicts the hypotheses
on $\bu'$, using Lemma \ref{seca4}.  Since neither $\bu$ nor $\bu'$ contains
$E_0$ or $E_n$, the result follows by the faithfulness of the diagram calculus
for the Temperley--Lieb algebra 
\cite[\S6.4]{marbk}.
This completes the proof of (i).

We now prove (ii) in the case where $\bu' = IJ$; the case $\bu' = JI$
follows by a symmetrical argument.  
Thus, $\bu$ maps to the 
same diagram as $IJ$.  The fact that 
$\ldescent{\bu}$ is the set of generators in $I$ and $\rdescent{\bu}$ 
is the set of generators 
in $J$ means that $\bu$ cannot be left or right reducible.  
By Proposition \ref{secc3} (ii), this 
immediately means that $\bu = IJ$.
\end{proof}

\section{Proof of the theorem} 

\begin{lem}\label{secc4}
Let $\bu$ be a reduced monomial and let $D$ be the corresponding diagram.
Then $D$ avoids all the features on the left hand sides of Table
\ref{blobtab}, (the table in section \ref{sect:symp} depicting all the
straightening relations).
Furthermore, $D$ contains at most one line
with more than one decoration.
\end{lem}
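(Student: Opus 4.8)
The plan is to show that each of the eight features of Table~\ref{blobtab}, and a second decoration on any line, can occur in $D=\phi(\bu)$ only if $\bu$ is commutation equivalent to a word on which one of the length-reducing relations of Definition~\ref{2BTL pres} can be applied, contradicting the hypothesis that $\bu$ is reduced. Two facts drive everything. First, $e=e_0$ and $f=e_n$ act as decorated identities: they place a single left (respectively right) blob on the strand through position $1$ (respectively $n$) and create no cups or caps. Hence the connectivity of $D$---and in particular its loops---is governed by $e_1,\ldots,e_{n-1}$ alone, and every blob on an arc is either a boundary decoration (governed by Lemma~\ref{secb4}) or an interior decoration arising at a direction reversal. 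Second, Lemma~\ref{secb3} pins this down quantitatively: each west-east reversal carries exactly one left blob and consumes a single $E_1$ cup-cap pair, while each east-west reversal carries exactly one right blob and consumes a single $E_{n-1}$ cup-cap pair.

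First I would treat the loop features. A loop is a closed arc, so after orienting it and reading off its cup/cap labels it must contain both a west-east and an east-west direction reversal. The proof of Lemma~\ref{secb3} rests only on Lemma~\ref{secb2} and the local pictures, and so applies verbatim to closed arcs; it therefore forces every loop to carry at least one left blob and at least one right blob. This already excludes the undecorated loop and the singly-decorated $\kl$- and $\kr$-loops. For the remaining $\kk$-loop, the same application of Lemma~\ref{secb3} exhibits a single occurrence of $E_1$ and a single occurrence of $E_{n-1}$ whose cups and caps bound the loop; assembling these via Lemma~\ref{secb2} and the definitions of $I$ and $J$ shows $\bu$ to be commutation equivalent to a word containing $IJI$ or $JIJ$, contradicting reducedness.

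Next I would exclude the line features $\dl$, $\dr$, $k_L$ and $k_R$, that is, the patterns $LL$, $RR$, $LRL$ and $RLR$ on a single arc. Since two west-east reversals on one arc are necessarily separated by an east-west reversal---one must return to westward motion before reversing again---Lemma~\ref{secb3} deposits a right blob between the two left blobs, turning any putative $LL$ into an $LRL$. Thus an $LL$ pattern can use at most one interior left blob, its second left blob must be a boundary decoration, and Lemmas~\ref{secb4} and~\ref{secb2}(i) then produce a repeated occurrence of $E_0$---an $E_0^2$ or an $E_1E_0E_1$---up to commutation, which is length-reducing. An $LRL$ pattern uses two west-east and one east-west reversal, hence two occurrences of $E_1$ and one of $E_{n-1}$; feeding this through Lemma~\ref{secb2} and the definition of $I$ assembles a subword commutation equivalent to $IJI$. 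The patterns $RR$ and $RLR$ are ruled out by the symmetric arguments with $E_n$, $E_{n-1}$ and $J$.

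Finally, once these four line features are excluded, any multiply-decorated arc carries exactly one left and one right blob, and so possesses exactly one west-east and one east-west reversal, that is, one $E_1$- and one $E_{n-1}$-cup-cap pair of the shape given by Lemma~\ref{secb3}. A second multiply-decorated arc would supply a further such pair of each kind interleaved with the first, and assembling the four pairs through Lemma~\ref{secb2} into the generators $I$ and $J$ again forces a subword commutation equivalent to $IJI$ or $JIJ$, contradicting reducedness. I expect the two global steps---the $\kk$-loop and this last counting argument---to be the main obstacle, since both must invoke the relations $IJI=\kk I$ and $JIJ=\kk J$ rather than any local relation, and it requires care both to verify that Lemma~\ref{secb3} applies to closed arcs and to check that the decorated cup-cap data it extracts genuinely reassembles, up to commutation, into an occurrence of $I$ and $J$.
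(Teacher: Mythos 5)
Your route is genuinely different from the paper's. The paper proves this lemma by induction on the length of $\bu$, using Proposition~\ref{secc3}: any reduced monomial other than a product of commuting generators, $IJ$ or $JI$ is left or right reducible, so one writes $\bu=st\bv$ with $t\notin\{E_0,E_n\}$, applies the inductive hypothesis to the shorter reduced word $t\bv$, and checks that the single extra multiplication by $s$ cannot create a forbidden feature or a second doubly-decorated line (multiplying by $e$ or $f$ merely decorates an edge that Lemma~\ref{secb4} guarantees is undecorated; multiplying by $e_i$ merely rewires through an undecorated edge). That local check is essentially trivial and entirely avoids what you correctly identify as the main obstacle in your global argument. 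Your treatment of the ``easy'' features (the undecorated, $\kl$- and $\kr$-loops via the observation that a closed curve must contain both kinds of reversal, and the $\dl$-, $\dr$-lines via the fact that two like blobs must be separated by an opposite reversal) is plausible and could be made rigorous along the lines you indicate.

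The gap lies in the cases that actually require the relations $IJI=\kk I$ and $JIJ=\kk J$: the $LR$-decorated loop, the $LRL$- and $RLR$-lines, and the exclusion of a second multiply-decorated line. For each of these you assert that the cup--cap data extracted from Lemma~\ref{secb3} ``assembles, via Lemma~\ref{secb2} and the definitions of $I$ and $J$, into a subword commutation equivalent to $IJI$ or $JIJ$'', but this is precisely the hard combinatorial content and no argument is given. Since $I$ and $J$ are products of \emph{all} the odd- and even-indexed generators, you must show that the full west--east--west zigzag of a single arc pins down an occurrence of every $E_0,\dots,E_n$ in the correct relative order, \emph{and} that no other letters of $\bu$ (belonging to other strands) obstruct the commutations needed to bring these occurrences together into literal copies of $I$ and $J$. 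The paper carries out an argument of exactly this type in the proof of Lemma~\ref{secb5}(i), and even in that special case it requires the careful ``immediately above / immediately below'' bookkeeping to control the interleaving; nothing comparable appears in your sketch. A secondary, smaller issue is that Lemma~\ref{secb3} and the whole orientation formalism are set up in the paper only for non-loop arcs, so the claim that the lemma ``applies verbatim to closed arcs'' also needs to be checked (one must verify that a direction reversal read cyclically around a loop still corresponds to consecutive occurrences of a single generator in the sense of Lemma~\ref{secb2}). Until these points are supplied, the cases in which the lemma could actually fail remain unproved.
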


\begin{proof}
The proof is by induction on the length of $\bu$.
If $\bu$ is a product of commuting generators, or $\bu = IJ$, or $\bu = JI$,
the assertions are easy to check, so we may assume that this is not the case.
(This covers the base case of the induction as a special case.)

By Proposition \ref{secc3}, $\bu$ must either be left reducible or right reducible.
We treat the case of left reducibility; the other follows by a symmetrical
argument.

By applying commutations to $\bu$ if necessary, we may now assume that 
$\bu = st \bv$, where $s$ and $t$ are noncommuting generators, and $t \not\in
\{E_0, E_n\}$.  By induction, we know that the reduced monomial $t \bv$ corresponds
to a diagram $D'$ with none of the forbidden features and at most one edge
with two decorations.

Suppose that $t = E_1$ and $s = E_0$.  By Lemma \ref{secb4}, points $1$ and $2$ of
$D'$ must be connected by an undecorated edge, and the effect of multiplying
by $E_0$ is simply to decorate this edge.  This does not introduce any forbidden
features, nor does it create an edge with two decorations, and this completes
the inductive step in this case.

The case where $t = E_{n-1}$ and $s = E_n$ is treated similarly to the above
case, so we may now assume that $s, t \not\in \{E_0, E_n\}$.  We must either have
$s = E_i$ and $t = E_{i+1}$, or vice versa.

Suppose that $s = E_i$ and $t = E_{i+1}$.  By
Lemma \ref{secb4}, this means that points $i+1$ and $i+2$ of $D'$ are connected 
by an undecorated edge.  The effect of multiplying by $s$ is then (a) to remove
this undecorated edge, then (b) to disconnect the edge emerging from point $i$ 
of $D'$ and reconnect it to point $i+2$, retaining its original decorated
status, then (c) to install an undecorated edge between points $i$ and
$i+1$.  This procedure does not create any forbidden features, nor does it
create a new edge with more than one decoration.

The case in which $s = E_{i+1}$ and $t = E_i$ is treated using a parallel
argument, and this completes the inductive step in all cases.
\end{proof}

\begin{lem}\label{secc5}
Let $\bu$ be a reduced monomial with corresponding diagram $D$.
\begin{enumerate}
\item[\rm (i)]{If points $1$ and $2$ (respectively, $1'$ and $2'$) 
are connected in $D$ by an edge decorated by $L$ but not $R$,
then $\bu$ is equal (as an algebra element) to a word of
the form $\bu' = E_0 E_1 \bv$ (respectively, $\bu' = \bv E_1 E_0$).}
\item[\rm (ii)]{If points $n-1$ and $n$ (respectively, $(n-1)'$ and $n'$)
are connected in $D$ by an edge
decorated by $R$ but not $L$,
then $\bu$ is equal (as an algebra element) to a word of
the form $\bu' = E_n E_{n-1} \bv$ (respectively, $\bu' = \bv E_{n-1} E_n$).}
\end{enumerate}
\end{lem}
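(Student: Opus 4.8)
The plan is to prove part (i) for the points $1$ and $2$; the statement for $1'$ and $2'$ will then follow by the symmetric argument phrased in terms of right descent sets, and part (ii) will follow by the mirror symmetry interchanging $E_0 \leftrightarrow E_n$, $E_1 \leftrightarrow E_{n-1}$ and $L \leftrightarrow R$. So suppose points $1$ and $2$ of $D = \phi(\bu)$ are joined by an edge carrying an $L$ but no $R$ decoration. Since there is no $R$ on this edge, the decoration nearest point $1$ is an $L$, so $D$ is $L$-decorated at $1$, and Lemma~\ref{secb4}(i) gives $E_0 \in \ldescent{\bu}$. Hence $\bu$ is commutation equivalent to a monomial $E_0 \bw$, and $\bw$ is again reduced (a shorter expression for $\bw$ would give one for $\bu$). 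I would then study $D' = \phi(\bw)$, using that $\phi(E_0 \bw) = e \cdot D'$ simply installs an $L$-blob nearest the top of the strand at position $1$ and alters no connections, so $1$ and $2$ are still joined in $D'$.

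The crux is to show that the edge joining $1$ and $2$ in $D'$ is \emph{undecorated}. It carries no $R$, for otherwise $D = e \cdot D'$ would inherit that $R$ on the $1$--$2$ edge, contradicting the hypothesis. It also carries no $L$: an $L$ on this edge of $D'$ would, together with the $L$ supplied by the factor $e$, produce a single line of $D$ with two $L$-decorations, which is one of the forbidden features on the left of Table~\ref{blobtab}, contradicting Lemma~\ref{secc4}. (Equivalently, if that $L$ were nearest point $1$ in $D'$ then $E_0 \in \ldescent{\bw}$ by Lemma~\ref{secb4}(i), forcing the non-reduced subword $E_0 E_0$ in $\bu$.) With the $1$--$2$ edge of $D'$ now known to be undecorated, Lemma~\ref{secb4}(iii) applied to $\bw$ yields $E_1 \in \ldescent{\bw}$, so $\bw$ is commutation equivalent to $E_1 \bv$. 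Therefore $\bu \equiv E_0 E_1 \bv$ in $M_n$, which is in particular the required equality in $P_n$.

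The step I expect to be the main obstacle is controlling the shape of the arc joining $1$ and $2$: a priori it need not be a simple cup but could be a long arc with direction reversals, so that the decoration bookkeeping above is not obviously exhaustive. This is exactly where Lemmas~\ref{secb3} and \ref{secc4} do the work — the former restricts any direction reversal to carry a forced blob of a definite type (and to arise from a single $E_1$ or $E_{n-1}$), while the latter forbids a second $L$ on a line and bounds the multiply-decorated lines — so that the argument of the previous paragraph is valid however the arc is routed. I would isolate this as the one genuinely diagrammatic verification; once it is in place, the descent-set extractions of $E_0$ and then $E_1$ are purely formal. Note that the conclusion is stated as equality ``as an algebra element'' to leave room for invoking relations such as $E_1 E_0 E_1 = \kl E_1$ in the mirror cases, but the argument above in fact delivers the stronger statement that $\bu$ is commutation equivalent to $E_0 E_1 \bv$.
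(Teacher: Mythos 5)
Your proposal is correct and follows essentially the same route as the paper: extract $E_0$ from the left descent set via Lemma~\ref{secb4}(i), show the $1$--$2$ edge of the diagram of the remaining reduced word is undecorated (no $R$ by inheritance, no $L$ by the forbidden double-blob feature via Lemma~\ref{secc4}), then extract $E_1$ via Lemma~\ref{secb4}(iii). Your closing observation that the argument actually yields commutation equivalence, not merely equality in $P_n$, is a valid (mild) strengthening of what the paper states.
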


\begin{proof}
We first prove the part of (i) dealing with points $1$ and $2$.
By Lemma \ref{secb4}, we have $E_0 \in \ldescent{\bu}$, so $\bu = E_0 \bv'$.
Now $\bv'$ is also a reduced monomial, and by Lemma \ref{secc4}, $\bv'$
corresponds to a diagram $D'$ with no forbidden features.  Since multiplication
by $e$ does not change the underlying shape of a diagram (ignoring the
decorations), it must be the case that points $1$ and $2$ of $D'$ are 
connected by an edge with some kind of decoration.
Since $D$ has no forbidden features and
the corresponding edge in $D$ has no $R$-decoration, the only way for this
to happen is if the edge connecting points $1$ and $2$ in $D'$ is undecorated.
By Lemma \ref{secb4}, this means that $\bv'$ is equal as an algebra element to
a monomial of the form $E_1 \bv$, and this completes the proof of (i) in
this case.

The other assertion of (i) and the assertions of (ii) follow by parallel 
arguments.
\end{proof}

\begin{proof}[Proof of Theorem \ref{thmmain}]
As all the parameters are invertible, it is enough to prove
the statement when $\kl=\kr=1$.
Indeed the  rescaling of
$E_0$ and $E_n$:
$$
E_0\mapsto \frac{E_0}{\kl},\quad  E_n\mapsto \frac{E_n}{\kr}
$$
 has the following effect on the parameters:
$$
\dd\mapsto \dd,\quad
\dl\mapsto \frac{\dl}{\kl},\quad
\dr\mapsto \frac{\dr}{\kr},\quad
\kl\mapsto 1,\quad
\kr\mapsto 1,\quad
\kk\mapsto \frac{\kk}{\kl\kr}
$$
Thus 
any $P_n$ (with 6 parameters)
is isomorphic to a case with $\kl=\kr=1$.

It is clear from the generators and relations that the reduced monomials are
a spanning set, and that the diagram algebra is a homomorphic image of the
abstractly defined algebra.  By Lemma \ref{secc4}, all reduced monomials map to
basis diagrams.  The only way the homomorphism could fail to be injective
is therefore for two reduced monomials $\bu$ and $\bu'$ to map to the same
diagram $D$, and yet to be distinct as elements in $P_n$.  

It is therefore enough to prove that if $\bu$ and $\bu'$ are reduced monomials
mapping to the same diagram, then they are equal in $P_n$.
Without loss of generality, we assume 
that $\ell(\bu) \leq \ell(\bu')$ (where $\ell$ denotes length).

We proceed by induction on $\ell(\bu)$.  If $\ell(\bu) \leq 1$, or, more
generally, if $\bu$ is a product of commuting generators, then Lemma
\ref{secb5}
shows that $\bu = \bu'$.  Similarly, if $\bu = IJ$ or $\bu = JI$, then
$\bu = \bu'$, again by Lemma \ref{secb5}.  In particular, 
this deals with the base 
case of the induction.

By Proposition \ref{secc3}, we may now assume that $\bu$ is either 
left or right
reducible.  We treat the case of left reducibility, the other being similar.
By applying commutations if necessary, we may reduce to the case where
$\bu = st \bv$, $s$ and $t$ are noncommuting generators, and $t \not\in
\{E_0, E_n\}$.

Suppose that $s = E_0$, meaning that $t = E_1$.  In this case, points $1$ and
$2$ of $D$ are connected by an edge decorated by $L$ but not $R$.  By
Lemma \ref{secc5}~(i), this means that we have $\bu' = st \bv'$ as algebra
elements.
Since $\bu$ and $\bu'$ share a diagram, the (not necessarily reduced) monomials
$t \bu$ and $t \bu'$ must also share a diagram.  Since $tst = \kl t = t$, 
the (reduced) monomials $t \bv$ and $t \bv'$ also map to the same diagram,
$D'$.  However, $t \bv$ is shorter than $\bu$, so by induction, 
$t \bv = t \bv'$, which in turn implies that $\bu = \bu'$.

Suppose that $s = E_n$, meaning that $t = E_{n-1}$.  An argument similar to
the above, using Lemma \ref{secc5}~(ii), establishes that $\bu = \bu'$ in this
case too.

We are left with the case where $s = E_i$ and either $t = E_{i+1}$ or
$t = E_{i-1}$ (where $t \not\in \{E_0, E_n\}$).  We will treat the case where
$t = E_{i+1}$; the other case follows similarly.  In this case, we have
$tst = t$, and so $t \bu = tst \bv = t \bv$.  It is not necessarily true that
$t \bu'$ is a reduced monomial, but it maps to the same diagram as $t \bv$,
which is reduced.  After applying algebra relations to 
$t \bu'$, we may transform it into a scalar multiple of a reduced monomial,
$\br$.  Since reduced monomials map to basis diagrams (Lemma \ref{secc4}),
the scalar involved must be $1$.  Now the reduced monomials $t \bv$ and
$\br$ map to the same basis diagram, and $t \bv$ is shorter than $\bu$,
so by induction, we have $t \bv = \br$ in $P_n$.

Since $s \in \ldescent{\bu}$, we have $s \in \ldescent{\bu'}$ by Lemma
\ref{secb4}, so that $\bu' = s \bv''$ for some reduced monomial $\bv''$.
Since $sts = s$, we have $s (t \bu') = \bu'$.  We have shown that
$t \bu' = \br = t \bv$, so we have $$
\bu' = s t \bu' = st \bv = \bu
,$$ which completes the proof.
\end{proof}

\providecommand{\bysame}{\leavevmode\hbox to3em{\hrulefill}\thinspace}
\providecommand{\MR}{\relax\ifhmode\unskip\space\fi MR }
\providecommand{\MRhref}[2]{%
  \href{http://www.ams.org/mathscinet-getitem?mr=#1}{#2}
}
\providecommand{\href}[2]{#2}

\end{document}